\documentclass[reqno,12pt]{amsart}
\usepackage{amsfonts}
\usepackage{bbm}
\usepackage{}
\setlength{\textheight}{23cm}
\setlength{\textwidth}{16cm}
\setlength{\oddsidemargin}{0cm}
\setlength{\evensidemargin}{0cm}
\setlength{\topmargin}{0cm}
\numberwithin{equation}{section}
\usepackage{color}
\usepackage{graphicx}
\usepackage{indentfirst}
\usepackage{color}
\usepackage{amssymb}
\usepackage{mathrsfs}
\usepackage{xy}
\xyoption{all}
 \def\Hom{\mbox{\rm Hom}}  
 \def\fin{\hfill$\square$}   \def\mod{\mbox{\rm \textbf{mod}}\,}
    
\def\cone{\mbox{\rm cone}}\def\cocone{\mbox{\rm cocone}}
\def\cim{\mbox{\rm sim}}\def\add{\mbox{\rm add}}
 
\def\Filt{\mbox{\rm \textbf{Filt}}}

\theoremstyle{plain}
\newtheorem{theorem}{\bf Theorem}[section]
\newtheorem{lemma}[theorem]{\bf Lemma}
\newtheorem{corollary}[theorem]{\bf Corollary}
\newtheorem{proposition}[theorem]{\bf Proposition}

\theoremstyle{definition}
\newtheorem{definition}[theorem]{\bf Definition}
\newtheorem{remark}[theorem]{\bf Remark}
\newtheorem{example}[theorem]{\bf Example}

\newcommand{\bt}{\begin{theorem}}
\newcommand{\et}{\end{theorem}}
\newcommand{\bl}{\begin{lemma}}
\newcommand{\el}{\end{lemma}}
\newcommand{\bd}{\begin{definition}}
\newcommand{\ed}{\end{definition}}
\newcommand{\bc}{\begin{corollary}}
\newcommand{\ec}{\end{corollary}}
\newcommand{\bp}{\begin{proof}}
\newcommand{\ep}{\end{proof}}
\newcommand{\bx}{\begin{example}}
\newcommand{\ex}{\end{example}}
\newcommand{\br}{\begin{remark}}
\newcommand{\er}{\end{remark}}
\newcommand{\be}{\begin{equation}}
\newcommand{\ee}{\end{equation}}
\newcommand{\ba}{\begin{align}}
\newcommand{\ea}{\end{align}}
\newcommand{\bn}{\begin{enumerate}}
\newcommand{\en}{\end{enumerate}}
\newcommand{\bcs}{\begin{cases}}
\newcommand{\ecs}{\end{cases}}

%%%%%%%%%%%%%%%%%%%%%%%%%%%%%%%%%%%%%%%%%
\makeatletter
\renewcommand{\section}{\@startsection{section}{1}{0mm}
  {-\baselineskip}{0.5\baselineskip}{\bf\leftline}}
\makeatother

\begin{document}

\title[ Semibricks in extriangulated categories]{Semibricks in extriangulated categories}
\author[L. Wang, J. Wei, H. Zhang]{Li Wang, Jiaqun Wei,  Haicheng Zhang}
\address{Institute of Mathematics, School of Mathematical Sciences, Nanjing Normal University,
 Nanjing 210023, P. R. China.\endgraf}
\email{wl04221995@163.com (Wang); weijiaqun@njnu.edu.cn (Wei); zhanghc@njnu.edu.cn (Zhang).}

%\thanks{$\ast$: Corresponding author. The authors are supported by the Natural Science Foundation of China (Grant No.11471269; No.211070B31704).}
\subjclass[2010]{18E05, 18E30.}
\keywords{Extriangulated categories; Semibricks; Wide subcategories; Cotorsion pairs.}

\begin{abstract}
Let $\mathcal{X}$ be a semibrick in an extriangulated category $\mathscr{C}$. Let $\mathcal{T}$ be the filtration subcategory generated by $\mathcal{X}$. We give a one-to-one correspondence between simple semibricks and length wide subcategories in $\mathscr{C}$. This generalizes a bijection given by Ringel in module categories, which has been generalized by Enomoto to exact categories. Moreover, we also give a one-to-one correspondence between cotorsion pairs in $\mathcal{T}$ and certain subsets of $\mathcal{X}$. Applying to the simple minded systems of an triangulated category, we recover a result given by Dugas.

\end{abstract}

\maketitle

%%%%%%%%%%%%%%%%%%%%%%%%%%%%%%%%%%%%%%%%%%%%%%%%%%%%%%%%%%%%%%%%%%%%%%%%%%%%%%%%%%%%%%%%%%%%%%%%%%%%%%%%%%%%%%%%%%%%%%%%%%%%%%%%%%%%%%%%%%%%%%%%%%%%%%%%
\section{Introduction}

In representation theory of a finite-dimensional algebra $A$ over a field, the notion of
simple modules is fundamental. By Schur's lemma, the endomorphism ring of a simple module is a division algebra; and there exists no nonzero homomorphism between two nonisomorphic simple modules. For each $A$-module satisfying that its endomorphism ring is a division algebra, we call it a brick. This
notion is a generalization of simple modules. For each set of
isoclasses of pairwise Hom-orthogonal bricks, we call it a semibrick. By Ringel \cite{Ri}, semibricks of $A$-modules
correspond bijectively to the wide subcategories of $\mod A$, that is, the subcategories which
are closed under taking kernels, cokernels, and extensions. It is noted that bricks and wide subcategories have close relationship with ring epimorphisms and
universal localizations (cf. \cite{St,Sch,GL}). Asai studies the semibricks from the point of view of $\tau$-tilting theory in \cite{As}.

Recently, Enomoto \cite{En} generalizes the notion of simple objects in an abelian category to an exact category and then generalizes Ringel's bijection to exact categories. The notion of a triangulated category was introduced by Grothendieck and later by Verdier \cite{Ve}. It has become a very powerful tool in many branches of mathematics, and has been investigated by many papers such as \cite{Th}, \cite{Ha}, \cite{Ne}, \cite{Iy2}. Recently, Nakaoka
and Palu \cite{Na} introduced an extriangulated category which is extracting properties on triangulated categories and exact categories.

In this paper, we study the semibricks in an extriangulated category. Explicitly, we introduce simple objects, wide subcategories in an extriangulated category, and then
generalizes Ringel's bijection to extriangulated categories. Moreover, we also establish a relation between semibricks and cotorsion pairs.
%Our first main result is the following.
%\begin{theorem} (see Theorem \ref{H}) Let $\mathscr{C}$ be an Artin extriangulated R-category and $\mathcal{T}=\Filt_{\mathscr{C}}(\mathcal{X})$ has enough projectives and enough
%injectives, where $\mathcal{X}$ is a semi-orthogonal semibrick. The assignments $\mathcal{U}\mapsto\mathcal{S}_{\mathcal{U}}$
%and $\mathcal{S}\mapsto\Filt_{\mathcal{T}}(\mathcal{S})$ give one-to-one correspondence between the following two classes.
%
%$(1)$ The class of filtration subcategories $\mathcal{U}$ with $(\mathcal{U},\mathcal{U}^{\perp})$ is a cotorsion pair in $\mathcal{T}$.
%
%$(2)$ The class of sets $\mathcal{S}$ with $\mathcal{S}_{\mathcal{P}}\subseteq\mathcal{S}\subseteq\mathcal{X}$.
%\end{theorem}
%
% Our second main result is the following.
%\begin{theorem}
%(see Theorem \ref{main}) Let $\mathscr{C}$ be an extriangulated category. The assignments $\mathcal{X}\mapsto\Filt_{\mathscr{C}}(\mathcal{X})$ and $\mathcal{D}\mapsto\cim(\mathcal{D})$ give one-to-one
%correspondence between the following two classes.
%
%$(1)$ The class of simple semibricks $\mathcal{X}$ in $\mathscr{C}$.
%
%$(2)$ The class of length wide subcategories $\mathcal{D}$ of $\mathscr{C}$.
%\end{theorem}

The paper is organized as follows: We summarize some basic definitions and properties of an extriangulated category and its filtration subcategory in Section 2. In Section 3, we introduce wide subcategories of an extriangulated category, and give some properties of semibricks. Section 4 is devoted to giving a one-to-one correspondence between simple semibricks and length wide subcategories.  Finally, we study a relation between filtration subcategories generated by a semibrick and cotorsion pairs in Section 5.

\subsection{Conventions and notation.}
A subcategory $\mathcal{D}$ of an additive category $\mathscr{C}$ is said to be {\em contravariantly finite} in $\mathscr{C}$ if for each object $M\in\mathscr{C}$, there exists a morphism $f:X\rightarrow M$ with $X\in \mathcal{D}$ such that $\mathscr{C}(\mathcal{D},f)$ is an epimorphism. Dually, we define {\em covariantly
finite} subcategories in $\mathscr{C}$. Furthermore, a subcategory of $\mathscr{C}$ is said to be {\em functorially finite} in $\mathscr{C}$ if it is both contravariantly finite and covariantly finite in $\mathscr{C}$. An additive category is {\em Krull--Schmidt} if each of its objects is the direct sum of finitely many objects with local endomorphism rings.

Throughout this paper, we assume, unless otherwise stated, that all considered categories are skeletally small, Hom-finite, Krull--Schmidt, $k$-linear over a fixed field $k$, and subcategories are {full} and closed under isomorphisms. We write $\mathcal{X}\subseteq\mathscr{C}$ for a subset of objects in $\mathscr{C}$, which we identify with the
corresponding full subcategory of $\mathscr{C}$. Let $Q$ a finite acyclic quiver, we denote by $S_i$ the one-dimensional simple (left) $kQ$-module associated to the vertex $i$ of $Q$. Denote by $P_i$ and $I_i$ the projective cover and injective envelop of $S_i$, respectively.
%%%%%%%%%%%%%%%%%%%%%%%%%%%%%%%%%%%%%%%%%%%%%%%%%%%%%%%%%%%%%%%%%%%%%%%%%%%%%%%%%%%%%%%%%%%%%%%%%%%%%%%%%%%%%%%%%%%%%%%%%

\section{Preliminaries}

\subsection{Extriangulated categories}
Let us recall some notions concerning extriangulated categories from \cite{Na}.

Let $\mathscr{C}$ be an additive category and let $\mathbb{E}$: $\mathscr{C}^{op}\times\mathscr{C}\rightarrow Ab$ be a biadditive functor. For any pair of objects $A$, $C\in\mathscr{C}$, an element $\delta\in \mathbb{E}(C,A)$ is called an {\em $\mathbb{E}$-extension}. The zero element $0\in\mathbb{E}(C,A)$ is called the {\em split $\mathbb{E}$-extension}.
 For any morphism $a\in \mathscr{C}(A,A')$ and $c\in {\mathscr{C}}(C',C)$, we have
$\mathbb{E}(C,a)(\delta)\in\mathbb{E}(C,A')$ and $\mathbb{E}(c,A)(\delta)\in\mathbb{E}(C',A).$
We simply denote them by $a_{\ast}\delta$ and $c^{\ast}\delta$,~respectively.  A morphism $(a,c)$: $\delta\rightarrow\delta'$ of $\mathbb{E}$-extensions is a pair of morphisms $a\in \mathscr{C}(A,A')$ and $c\in {\mathscr{C}}(C,C')$ satisfying the equality $a_{\ast}\delta=c^{\ast}\delta'$.

By Yoneda's lemma, any $\mathbb{E}$-extension $\delta\in \mathbb{E}(C,A)$ induces natural transformations
$$\delta_{\sharp}: \mathscr{C}(-,C)\rightarrow\mathbb{E}(-,A)~~\text{and}~~\delta^{\sharp}: \mathscr{C}(A,-)\rightarrow\mathbb{E}(C,-).$$ For any $X\in\mathscr{C}$, these
$(\delta_{\sharp})_X$ and $(\delta^{\sharp})_X$ are defined by
$(\delta_{\sharp})_X:  \mathscr{C}(X,C)\rightarrow\mathbb{E}(X,A), f\mapsto f^\ast\delta$ and $(\delta^{\sharp})_X:   \mathscr{C}(A,X)\rightarrow\mathbb{E}(C,X), g\mapsto g_\ast\delta$.

Two sequences of morphisms $A\stackrel{x}{\longrightarrow}B\stackrel{y}{\longrightarrow}C$ and $A\stackrel{x'}{\longrightarrow}B'\stackrel{y'}{\longrightarrow}C$ in $\mathscr{C}$ are said to be {\em equivalent} if there exists an isomorphism $b\in \mathscr{C}(B,B')$ such that the following diagram
$$\xymatrix{
  A \ar@{=}[d] \ar[r]^-{x} & B\ar[d]_{b}^-{\simeq} \ar[r]^-{y} & C\ar@{=}[d] \\
  A \ar[r]^-{x'} &B' \ar[r]^-{y'} &C  }$$ is commutative.
We denote the equivalence class of $A\stackrel{x}{\longrightarrow}B\stackrel{y}{\longrightarrow}C$ by $[A\stackrel{x}{\longrightarrow}B\stackrel{y}{\longrightarrow}C]$. In addition, for any $A,C\in\mathscr{C}$, we denote as
$$0=[A\stackrel{\tiny\begin{pmatrix} 1\\0\end{pmatrix}}{\longrightarrow}A\oplus C\stackrel{\tiny\begin{pmatrix} 0&1\end{pmatrix}}{\longrightarrow}C].$$
For any two classes $[A\stackrel{x}{\longrightarrow}B\stackrel{y}{\longrightarrow}C]$ and $[A'\stackrel{x'}{\longrightarrow}B'\stackrel{y'}{\longrightarrow}C']$, we denote as
$$[A\stackrel{x}{\longrightarrow}B\stackrel{y}{\longrightarrow}C]\oplus[A'\stackrel{x'}{\longrightarrow}B'\stackrel{y'}{\longrightarrow}C']=
[A\oplus A'\stackrel{x\oplus x'}{\longrightarrow}B\oplus B'\stackrel{y\oplus y'}{\longrightarrow}C\oplus C'].$$

\begin{definition}
Let $\mathfrak{s}$ be a correspondence which associates an equivalence class $\mathfrak{s}(\delta)=[A\stackrel{x}{\longrightarrow}B\stackrel{y}{\longrightarrow}C]$ to any $\mathbb{E}$-extension $\delta\in\mathbb{E}(C,A)$ . This $\mathfrak{s}$ is called a {\em realization} of $\mathbb{E}$ if for any morphism $(a,c):\delta\rightarrow\delta'$ with $\mathfrak{s}(\delta)=[\Delta_{1}]$ and $\mathfrak{s}(\delta')=[\Delta_{2}]$, there is a commutative diagram as follows:
$$\xymatrix{
\Delta_{1}\ar[d] & A \ar[d]_-{a} \ar[r]^-{x} & B  \ar[r]^{y}\ar[d]_-{b} & C \ar[d]_-{c}    \\
 \Delta_{2}&A\ar[r]^-{x'} & B \ar[r]^-{y'} & C .   }
$$  A realization $\mathfrak{s}$ of $\mathbb{E}$ is said to be {\em additive} if it satisfies the following conditions:

(a) For any $A,~C\in\mathscr{C}$, the split $\mathbb{E}$-extension $0\in\mathbb{E}(C,A)$ satisfies $\mathfrak{s}(0)=0$.

(b) $\mathfrak{s}(\delta\oplus\delta')=\mathfrak{s}(\delta)\oplus\mathfrak{s}(\delta')$ for any pair of $\mathbb{E}$-extensions $\delta$ and $\delta'$.
\end{definition}

Let $\mathfrak{s}$ be an additive realization of $\mathbb{E}$. If $\mathfrak{s}(\delta)=[A\stackrel{x}{\longrightarrow}B\stackrel{y}{\longrightarrow}C]$, then the sequence $A\stackrel{x}{\longrightarrow}B\stackrel{y}{\longrightarrow}C$ is called a {\em conflation}, $x$ is called an {\em inflation} and $y$ is called a {\em deflation}.
In this case, we say $A\stackrel{x}{\longrightarrow}B\stackrel{y}{\longrightarrow}C\stackrel{\delta}\dashrightarrow$ is an $\mathbb{E}$-triangle.
We will write $A=\cocone(y)$ and $C=\cone(x)$ if necessary. We say an $\mathbb{E}$-triangle is {\em splitting} if it
realizes 0.

\begin{definition}
(\cite[Definition 2.12]{Na})\label{F}
We call the triplet $(\mathscr{C}, \mathbb{E},\mathfrak{s})$ an {\em extriangulated category} if it satisfies the following conditions:\\
$\rm(ET1)$ $\mathbb{E}$: $\mathscr{C}^{op}\times\mathscr{C}\rightarrow Ab$ is a biadditive functor.\\
$\rm(ET2)$ $\mathfrak{s}$ is an additive realization of $\mathbb{E}$.\\
$\rm(ET3)$ Let $\delta\in\mathbb{E}(C,A)$ and $\delta'\in\mathbb{E}(C',A')$ be any pair of $\mathbb{E}$-extensions, realized as
$\mathfrak{s}(\delta)=[A\stackrel{x}{\longrightarrow}B\stackrel{y}{\longrightarrow}C]$, $\mathfrak{s}(\delta')=[A'\stackrel{x'}{\longrightarrow}B'\stackrel{y'}{\longrightarrow}C']$. For any commutative square in $\mathscr{C}$
$$\xymatrix{
  A \ar[d]_{a} \ar[r]^{x} & B \ar[d]_{b} \ar[r]^{y} & C \\
  A'\ar[r]^{x'} &B'\ar[r]^{y'} & C'}$$
there exists a morphism $(a,c)$: $\delta\rightarrow\delta'$ which is realized by $(a,b,c)$.\\
$\rm(ET3)^{op}$~Dual of $\rm(ET3)$.\\
$\rm(ET4)$~Let $\delta\in\mathbb{E}(D,A)$ and $\delta'\in\mathbb{E}(F,B)$ be $\mathbb{E}$-extensions realized by
$A\stackrel{f}{\longrightarrow}B\stackrel{f'}{\longrightarrow}D$ and $B\stackrel{g}{\longrightarrow}C\stackrel{g'}{\longrightarrow}F$, respectively.
Then there exist an object $E\in\mathscr{C}$, a commutative diagram
\begin{equation}\label{2.1}
\xymatrix{
  A \ar@{=}[d]\ar[r]^-{f} &B\ar[d]_-{g} \ar[r]^-{f'} & D\ar[d]^-{d} \\
  A \ar[r]^-{h} & C\ar[d]_-{g'} \ar[r]^-{h'} & E\ar[d]^-{e} \\
   & F\ar@{=}[r] & F   }
\end{equation}
in $\mathscr{C}$, and an $\mathbb{E}$-extension $\delta''\in \mathbb{E}(E,A)$ realized by $A\stackrel{h}{\longrightarrow}C\stackrel{h'}{\longrightarrow}E$, which satisfy the following compatibilities:\\
$(\textrm{i})$ $D\stackrel{d}{\longrightarrow}E\stackrel{e}{\longrightarrow}F$ realizes $\mathbb{E}(F,f')(\delta')$,\\
$(\textrm{ii})$ $\mathbb{E}(d,A)(\delta'')=\delta$,\\
$(\textrm{iii})$ $\mathbb{E}(E,f)(\delta'')=\mathbb{E}(e,B)(\delta')$.\\
$\rm(ET4)^{op}$ Dual of $\rm(ET4)$.
\end{definition}

Let $\mathscr{C}$ be an extriangulated category, and $\mathcal{D},\mathcal{D}'\subseteq\mathscr{C}$. We write $\mathcal{D}\ast\mathcal{D}'$ for the full subcategory of objects $X$ admitting an $\mathbb{E}$-triangle $D\stackrel{}{\longrightarrow}X\stackrel{}{\longrightarrow}D'\stackrel{}\dashrightarrow$ with $D\in\mathcal{D}$ and $D'\in\mathcal{D}'$. A subcategory $\mathcal{D}$ of  $\mathscr{C}$ is {\em extension-closed}, if $\mathcal{D}\ast\mathcal{D}=\mathcal{D}$.
An object $P$ in $\mathscr{C}$ is called {\em projective} if for any conflation $A\stackrel{x}{\longrightarrow}B\stackrel{y}{\longrightarrow}C$ and any morphism $c$ in $\mathscr{C}(P,C)$, there exists $b$ in $\mathscr{C}(P,B)$ such that $yb=c$. We denote the full subcategory of projective objects in $\mathscr{C}$ by $\mathcal{P}$. Dually, the {\em injective} objects are defined, and the full subcategory of injective objects in $\mathscr{C}$ is denoted by $\mathcal{I}$. We say that $\mathscr{C}$ {\em has enough projectives} if for any object $M\in\mathscr{C}$, there exists an $\mathbb{E}$-triangle $A\stackrel{}{\longrightarrow}P\stackrel{}{\longrightarrow}M\stackrel{}\dashrightarrow$ satisfying $P\in\mathcal{P}$. Dually, we define that $\mathscr{C}$ {\em has enough injectives}. In particular, if $\mathscr{C}$ is a triangulated category, then $\mathscr{C}$  has enough projectives and injectives with $\mathcal{P}$ and $\mathcal{I}$ consisting of zero objects.

\begin{example}
(a)  Exact categories, triangulated categories and extension-closed subcategories of triangulated categories are
extriangulated categories. (cf. \cite{Na})

(b) Let $\mathscr{C}$ be an extriangulated category. Then $\mathscr{C}/(\mathcal{P}\cap \mathcal{I})$ is an extriangulated category which is neither exact nor triangulated in general (cf. \cite[Proposition 3.30]{Na}).
\end{example}

\begin{proposition} \cite[Proposition 3.3]{Na}
Let $\mathscr{C}$ be an extriangulated category. For any $\mathbb{E}$-triangle $A\stackrel{}{\longrightarrow}B\stackrel{}{\longrightarrow}C\stackrel{\delta}\dashrightarrow$, the following sequences of natural transformations are exact.
$$\mathscr{C}(C,-)\rightarrow \mathscr{C}(B,-)\rightarrow \mathscr{C}(A,-)\stackrel{\delta^{\sharp}}\rightarrow\mathbb{E}(C,-)\rightarrow\mathbb{E}(B,-),$$
$$\mathscr{C}(-,A)\rightarrow \mathscr{C}(-,B)\rightarrow \mathscr{C}(-,C)\stackrel{\delta_{\sharp}}\rightarrow\mathbb{E}(-,A)\rightarrow\mathbb{E}(-,B).$$
\end{proposition}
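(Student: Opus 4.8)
The statement is the extriangulated analogue of the long exact Hom–$\mathbb{E}$ sequence induced by a short exact sequence, so the plan is to run the usual arguments purely from the axioms $\rm(ET1)$–$\rm(ET3)$ and their duals. I would fix an object $X\in\mathscr{C}$ and show that each displayed sequence, evaluated at $X$, is an exact sequence of abelian groups at its three middle terms. Since the axiom list $\rm(ET1)$–$\rm(ET4)$, $\rm(ET3)^{op}$, $\rm(ET4)^{op}$ is self-dual, $(\mathscr{C}^{op},\mathbb{E}^{op},\mathfrak{s}^{op})$ is again an extriangulated category, and the first displayed sequence for the $\mathbb{E}$-triangle $A\stackrel{x}{\rightarrow}B\stackrel{y}{\rightarrow}C\dashrightarrow$ is exactly the second displayed sequence for the induced $\mathbb{E}^{op}$-triangle $C\stackrel{y}{\rightarrow}B\stackrel{x}{\rightarrow}A\dashrightarrow$; hence it is enough to treat the second sequence
$$\mathscr{C}(-,A)\rightarrow\mathscr{C}(-,B)\rightarrow\mathscr{C}(-,C)\stackrel{\delta_{\sharp}}{\rightarrow}\mathbb{E}(-,A)\stackrel{x_{\ast}}{\rightarrow}\mathbb{E}(-,B).$$

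First I would record the elementary identities $y\circ x=0$, $x_{\ast}\delta=0$ and $y^{\ast}\delta=0$, each obtained by comparing the $\mathbb{E}$-triangle $A\stackrel{x}{\rightarrow}B\stackrel{y}{\rightarrow}C\stackrel{\delta}{\dashrightarrow}$ with a split one. For $y\circ x=0$: the pair $(1_A,0)$ is a morphism of $\mathbb{E}$-extensions $0\rightarrow\delta$ (with $0\in\mathbb{E}(C,A)$), since $0^{\ast}\delta=(1_A)_{\ast}0=0$ by additivity of $\mathbb{E}(-,A)$; realizing it produces a commutative ladder over $[A\rightarrow A\oplus C\rightarrow C]$ from which $y\circ x=0$ is read off. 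For $x_{\ast}\delta=0$: apply $\rm(ET3)$ to the commutative square with top row $x$, bottom row $1_B$, left column $x$, right column $1_B$, comparing $\delta$ with the split triangle $[B\stackrel{1_B}{\rightarrow}B\rightarrow 0]$; the resulting morphism of extensions witnesses $x_{\ast}\delta=0$. And $y^{\ast}\delta=0$ is the $\rm(ET3)^{op}$-dual of this. With these identities and the bifunctoriality of $\mathbb{E}$, the inclusions $\im\subseteq\Ker$ follow at all three terms: $y\circ x=0$ at $\mathscr{C}(-,B)$, while $(yh)^{\ast}\delta=h^{\ast}(y^{\ast}\delta)=0$ at $\mathscr{C}(-,C)$ and $x_{\ast}(f^{\ast}\delta)=f^{\ast}(x_{\ast}\delta)=0$ at $\mathbb{E}(-,A)$.

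For the reverse inclusions I would realize one comparison morphism in each case. At $\mathscr{C}(X,B)$: given $f\colon X\rightarrow B$ with $yf=0$, apply $\rm(ET3)^{op}$ to the square comparing the split triangle $X\stackrel{1_X}{\rightarrow}X\rightarrow 0$ with $\delta$ along the vertical maps $f$ and $0$ (the needed commutativity being $yf=0$); the left square of the realizing ladder gives $g\colon X\rightarrow A$ with $xg=f$. At $\mathscr{C}(X,C)$: given $f\colon X\rightarrow C$ with $f^{\ast}\delta=0$, the pullback $f^{\ast}\delta$ splits, so it is realized by $[A\rightarrow A\oplus X\rightarrow X]$; realizing the canonical comparison $(1_A,f)\colon f^{\ast}\delta\rightarrow\delta$ (a morphism of $\mathbb{E}$-extensions because $(1_A)_{\ast}(f^{\ast}\delta)=f^{\ast}\delta=f^{\ast}(\delta)$) and precomposing its middle map with the coprojection $X\rightarrow A\oplus X$ yields $h\colon X\rightarrow B$ with $yh=f$. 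At $\mathbb{E}(X,A)$: given $\rho\in\mathbb{E}(X,A)$ with $x_{\ast}\rho=0$, realize $\rho$ as $[A\stackrel{a}{\rightarrow}E\stackrel{e}{\rightarrow}X]$; realizing the canonical comparison $(x,1_X)\colon\rho\rightarrow x_{\ast}\rho$ over $[B\rightarrow B\oplus X\rightarrow X]$ and taking the $B$-component of its middle map gives $p\colon E\rightarrow B$ with $pa=x$, and then $\rm(ET3)$, applied to the left square with verticals $1_A$ and $p$ between the realizations of $\rho$ and $\delta$, returns a morphism $(1_A,f)\colon\rho\rightarrow\delta$ whose defining identity $(1_A)_{\ast}\rho=f^{\ast}\delta$ reads $\rho=\delta_{\sharp}(f)$.

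I do not expect a serious obstacle here; in particular $\rm(ET4)$ and $\rm(ET4)^{op}$ are not needed. The only real care is bookkeeping — keeping the two variances of $\mathbb{E}$ straight so that the correct square is fed to $\rm(ET3)$ rather than to $\rm(ET3)^{op}$, and checking in each case that the comparison pairs (such as $(1_A,f)\colon f^{\ast}\delta\rightarrow\delta$ and $(x,1_X)\colon\rho\rightarrow x_{\ast}\rho$) genuinely satisfy the defining relation $a_{\ast}(-)=c^{\ast}(-)$ of a morphism of $\mathbb{E}$-extensions, which in every instance is immediate from the bifunctoriality of $\mathbb{E}$.
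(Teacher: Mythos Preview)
The paper does not give its own proof of this proposition; it is stated with a citation to \cite[Proposition~3.3]{Na} and used as a black box. Your proposal, by contrast, reconstructs the argument from the axioms, and the sketch is correct: the duality reduction to the covariant sequence is legitimate since the axiom list is self-dual; the three vanishing identities $yx=0$, $x_{\ast}\delta=0$, $y^{\ast}\delta=0$ are obtained exactly as you say by comparing with split $\mathbb{E}$-triangles via $\rm(ET3)$/$\rm(ET3)^{op}$; and the three $\Ker\subseteq\im$ steps each use one realization of a morphism of extensions followed (in the last case) by one application of $\rm(ET3)$. Your bookkeeping is right in every case, and you are also correct that $\rm(ET4)$ plays no role. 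In short, there is nothing to compare against in the paper itself, and what you have written is essentially the standard proof one finds in Nakaoka--Palu.
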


\begin{lemma}\label{2.5} The upper-right square in (\ref{2.1}) is a weak pushout and weak pullback.
\end{lemma}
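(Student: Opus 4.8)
The plan is to establish both assertions by elementary diagram chases built on the exact sequences of \cite[Proposition 3.3]{Na} and on the compatibility data of (ET4). The square in question is
$$\xymatrix{B\ar[r]^-{f'}\ar[d]_-{g} & D\ar[d]^-{d}\\ C\ar[r]^-{h'} & E}$$
which commutes by $(\ref{2.1})$, i.e.\ $df'=h'g$. I will repeatedly use the following facts read off from $(\ref{2.1})$: $h=gf$ and $g'=eh'$; the relevant rows and columns $A\xrightarrow{f}B\xrightarrow{f'}D$, $B\xrightarrow{g}C\xrightarrow{g'}F$, $A\xrightarrow{h}C\xrightarrow{h'}E$ and $D\xrightarrow{d}E\xrightarrow{e}F$ are $\mathbb{E}$-triangles, the last of these having extension $f'_{\ast}\delta'$ by condition (i), so that $f'f=0$ and $ed=0$; and $\delta=d^{\ast}\delta''$ by condition (ii).

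For the weak pushout property, let $p\in\mathscr{C}(C,T)$ and $q\in\mathscr{C}(D,T)$ satisfy $pg=qf'$; I must produce $r\in\mathscr{C}(E,T)$ with $rh'=p$ and $rd=q$. Since $ph=p(gf)=(qf')f=0$, the exactness of $\mathscr{C}(E,T)\to\mathscr{C}(C,T)\to\mathscr{C}(A,T)$ coming from the $\mathbb{E}$-triangle $A\xrightarrow{h}C\xrightarrow{h'}E$ gives $r_0\in\mathscr{C}(E,T)$ with $r_0h'=p$. Setting $q_0:=q-r_0d$ we get $q_0f'=qf'-r_0h'g=qf'-pg=0$, hence $(q_0)_{\ast}(f'_{\ast}\delta')=(q_0f')_{\ast}\delta'=0$, and the exactness of $\mathscr{C}(E,T)\xrightarrow{d^{\ast}}\mathscr{C}(D,T)\xrightarrow{(f'_{\ast}\delta')^{\sharp}}\mathbb{E}(F,T)$ coming from $D\xrightarrow{d}E\xrightarrow{e}F$ yields $s_0\in\mathscr{C}(E,T)$ with $s_0d=q_0$. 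Now $r_0+s_0$ satisfies $(r_0+s_0)d=q$ but only $(r_0+s_0)h'=p+s_0h'$. Since $s_0h'g=(s_0d)f'=q_0f'=0$, the exactness of $\mathscr{C}(F,T)\xrightarrow{(g')^{\ast}}\mathscr{C}(C,T)\xrightarrow{g^{\ast}}\mathscr{C}(B,T)$ coming from $B\xrightarrow{g}C\xrightarrow{g'}F$ provides $t\in\mathscr{C}(F,T)$ with $tg'=-s_0h'$; then $r:=r_0+s_0+te$ does the job, since $rh'=p+s_0h'+tg'=p$ (using $g'=eh'$) and $rd=q+t(ed)=q$ (using $ed=0$).

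The weak pullback property I intend to obtain by the dual chase: given $u\in\mathscr{C}(T,C)$ and $v\in\mathscr{C}(T,D)$ with $h'u=dv$, first use $g'u=(eh')u=e(dv)=0$ to lift $u$ as $u=gw_0$; then, putting $v_0:=v-f'w_0$ so that $dv_0=dv-h'gw_0=dv-h'u=0$, invoke condition (ii) to get $v_0^{\ast}\delta=v_0^{\ast}d^{\ast}\delta''=(dv_0)^{\ast}\delta''=0$ and lift $v_0$ as $v_0=f'w_1$; finally correct $w:=w_0+w_1$ by a morphism factoring through $f$ so as also to achieve $gw=u$, in exact parallel with the three steps above. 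The genuine obstacle, in both halves, is this last correction step: the first, obvious lift ($r_0$, resp.\ $w_0$) satisfies only one of the two required identities, and one must exploit the non-uniqueness of lifts — adding a morphism through $e$, resp.\ through $f$ — to kill the discrepancy, which is precisely where conditions (i) and (ii) of (ET4) are used essentially. A subsidiary point needing care is the bookkeeping of which of the four $\mathbb{E}$-triangles in $(\ref{2.1})$ supplies each application of \cite[Proposition 3.3]{Na}.
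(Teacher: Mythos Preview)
Your argument is correct and runs by the same mechanism as the paper's proof—diagram chases through the long exact sequences of \cite[Proposition~3.3]{Na} using the (ET4) compatibilities. The only differences are cosmetic: the paper disposes of the weak pushout half by citing \cite[Lemma~3.13]{Na} rather than proving it directly, and for the weak pullback it shortens the chase to two steps by lifting $y$ through $f'$ first (via $y^{\ast}\delta=(dy)^{\ast}\delta''=(h'x)^{\ast}\delta''=x^{\ast}(h'^{\ast}\delta'')=0$) and then making a single correction through $f$, whereas your route lifts $u$ first and needs two further adjustments.
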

\begin{proof}By \cite[Lemma  3.13]{Na}, it follows that it is a weak pushout, so we only need to prove it is a weak pullback.  Let $x\in\mathscr{C}(M,C)$, $y\in\mathscr{C}(M,D)$ be two morphisms such that $h'x=dy$.

By $y^{\ast}\delta=y^{\ast}d^{\ast}\delta''=(dy)^{\ast}\delta''=(h'x)^{\ast}\delta''=x^{\ast}(h'^{\ast}\delta'')=0$ and the exactness of
$$\mathscr{C}(M,B)\stackrel{}{\longrightarrow}\mathscr{C}(M,D)\stackrel{}{\longrightarrow}\mathbb{E}(M,A),$$
there exists $l:M\rightarrow B$ such that $y=f'l$. Furthermore, by $h'(gl-x)=df'l-dy=0$ and the exactness of
 $$\mathscr{C}(M,A)\stackrel{}{\longrightarrow}\mathscr{C}(M,C)\stackrel{}{\longrightarrow}\mathscr{C}(M,E),$$
 there exists $s:M\rightarrow A$ such that $gl-x=hs$. Thus, we have obtained that $x=gl-hs=gl-gfs=g(l-fs)$ and $f'(l-fs)=y$. Hence, $t=l-fs$ makes the following diagram  communicative
 $$\xymatrix{
 M \ar@/_/[ddr]_{x} \ar@/^/[drr]^{y}
    \ar@{.>}[dr]|-{t}                   \\
   & B\ar[d]^{g} \ar[r]_{f'}
                      & D \ar[d]_{d}    \\
   & C \ar[r]^{h'}     & E.               }
$$\end{proof}

\subsection{Filtration subcategories}
In this subsection, let $\mathscr{C}$ be always an extriangulated category. For a collection $\mathcal{X}$ of objects in $\mathscr{C}$, we define full subcategories
$$^{\perp }\mathcal{X}=\{M\in \mathscr{C}|~\mathscr{C}( M,\mathcal{X})=0 \}$$ and
$$^{\perp_{1}}\mathcal{X}=\{M\in \mathscr{C}|~\mathbb{E}( M,\mathcal{X})=0 \}.$$
Dually, we define full subcategories $\mathcal{X}^{\perp }$ and $\mathcal{X}^{\perp_{1}}$ in $\mathscr{C}$.
The {\em filtration subcategory} $\mathbf{Filt_{\mathscr{C}}(\mathcal{X})}$ is consisting of all objects $M$ admitting a finite filtration of the from
\begin{equation}\label{2.2}
0=X_{0}\stackrel{f_{0}}{\longrightarrow}X_{1}\stackrel{f_{1}}{\longrightarrow}X_{2}{\longrightarrow}\cdots\stackrel{f_{n-1}}{\longrightarrow}X_{n}=M
\end{equation}
with $f_{i}$ being an inflation and $\cone(f_{i})\in\mathcal{X}$ for any $0\leq i\leq n-1$. In this case, we say that $M$ possesses an $\mathcal{X}$-{\em filtration} of length $n$ and the minimal length of such a filtration is called the $\mathcal{X}$-{\em length} of $M$, which is denoted by $l_{\mathcal{X}}(M)$.
\begin{remark}\label{2.6}
Let $\mathcal{X}$ be a collection of objects in $\mathscr{C}$. The filtration subcategory can be defined inductively as follows:

$(1)$ The filtration subcategory $\Filt_{\mathscr{C}}(\mathcal{X})=$$\cup_{n\in \mathbb{N}}F_{n}(X)$, where $F_{0}(X)=0$ and $F_{n}(X)=F_{n-1}(X)\ast (X\cup\{0\})$ for $n\geq1$. Observe that $F_{n-1}(X)\subseteq F_{n}(X)$ for $n\geq1$. Hence, $l_{\mathcal{X}}(M)=n$ if and only if $M\in F_{n}(X)$ but $M\notin F_{n-1}(X)$.

$(2)$ The filtration subcategory $\Filt_{\mathscr{C}}(\mathcal{X})=\cup_{n\in\mathbb{N}}F^{n}(X)$, where $F^{0}(X)=0$ and $F^{n}(X)=(X\cup\{0\})\ast F^{n-1}(X) $ for $n\geq1$. Noting that the operation $\ast$ is associative (cf. \cite[Lemma 3.9]{Zh}), we obtain that $F^{n}(X)= F_{n}(X)$. That is, an object $M$ admits an $\mathcal{X}$-filtration as (\ref{2.2}) if and only if there exists a finite filtration of the from
\begin{equation}M=Y_{n}\stackrel{g_{n-1}}{\longrightarrow}Y_{n-1}\stackrel{g_{n-2}}{\longrightarrow}Y_{n-2}{\longrightarrow}\cdots\stackrel{g_{0}}{\longrightarrow}Y_{0}=0\end{equation}
such that $g_{i}$ is a deflation and $\cocone(g_{i})\in\mathcal{X}$ for $0\leq i\leq n-1$.

$(3)$ Note that $^{\perp}\mathcal{X}=^{\perp}\mathbf{Filt_{\mathscr{C}}(\mathcal{X})}$ and $^{\perp_{1}}\mathcal{X}=^{\perp_{1}}\mathbf{Filt_{\mathscr{C}}(\mathcal{X})}$ (cf. \cite[Lemma 3.4]{Zh}). Dually, we have that $\mathcal{X}^{\perp}=\mathbf{Filt_{\mathscr{C}}(\mathcal{X})}^{\perp}$ and $\mathcal{X}^{\perp_{1}}=\mathbf{Filt_{\mathscr{C}}(\mathcal{X})}^{\perp_{1}}$.
\end{remark}

In what follows, we say that a commutative diagram is {\em exact} if every sub-diagram of the form $X\rightarrow Y\rightarrow Z$ is a conflation.

\begin{lemma}\label{1} Let $\mathcal{X}$ be a collection of objects in $\mathscr{C}$, and $A,C\in\mathbf{Filt_{\mathscr{C}}(\mathcal{X})}$. Then for any $\mathbb{E}$-triangle $A\stackrel{}{\longrightarrow}B\stackrel{}{\longrightarrow}C\stackrel{}\dashrightarrow$ in $\mathscr{C}$, we have that $B\in\mathbf{Filt_{\mathscr{C}}(\mathcal{X})}$ and $l_{\mathcal{X}}(B)\leq l_{\mathcal{X}}(A)+l_{\mathcal{X}}(C)$.
\end{lemma}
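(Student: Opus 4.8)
The plan is to prove both assertions simultaneously by induction on $l_{\mathcal{X}}(C)$, exploiting the inductive description of the filtration subcategory in Remark \ref{2.6}(2), namely $\Filt_{\mathscr{C}}(\mathcal{X})=\cup_n F^n(\mathcal{X})$ with $F^n(\mathcal{X})=(\mathcal{X}\cup\{0\})\ast F^{n-1}(\mathcal{X})$. The base case $l_{\mathcal{X}}(C)=0$ is immediate: then $C=0$, the $\mathbb{E}$-triangle splits (since $\mathbb{E}(0,A)=0$), so $B\cong A\in\Filt_{\mathscr{C}}(\mathcal{X})$ and $l_{\mathcal{X}}(B)=l_{\mathcal{X}}(A)$. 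The case $l_{\mathcal{X}}(C)=1$ (i.e. $C\in\mathcal{X}\cup\{0\}$, or more generally $C\in F^1$) is essentially the definition: prepending the given conflation $A\rightarrow B\rightarrow C$ to any $\mathcal{X}$-filtration of $A$ produces an $\mathcal{X}$-filtration of $B$ of length $l_{\mathcal{X}}(A)+1$.

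For the inductive step, suppose $l_{\mathcal{X}}(C)=n\geq 2$. By Remark \ref{2.6}(2) applied to $C$, or by splitting off the top step of a minimal $\mathcal{X}$-filtration of $C$, there is a conflation $C'\rightarrow C\rightarrow X$ with $X\in\mathcal{X}\cup\{0\}$ and $l_{\mathcal{X}}(C')\leq n-1$ (using Remark \ref{2.6}(1)); more precisely one can arrange $l_{\mathcal{X}}(C')= n-1$. Now I would feed the two $\mathbb{E}$-triangles $A\rightarrow B\rightarrow C$ and $C'\rightarrow C\rightarrow X$ into (ET4): this yields an object $E$ fitting into a $3\times 3$ exact commutative diagram, where in particular one obtains an $\mathbb{E}$-triangle $A\rightarrow E\rightarrow C'$ and an $\mathbb{E}$-triangle $E\rightarrow B\rightarrow X$. (One must be a little careful about the orientation in which (ET4), or its dual (ET4)$^{\mathrm{op}}$, is applied so that the relevant conflations come out with the stated middle terms; the diagram (\ref{2.1}) in the definition is exactly the template, with the composite $B\to C\to X$ playing a suitable role.) By the induction hypothesis applied to $A\rightarrow E\rightarrow C'$ we get $E\in\Filt_{\mathscr{C}}(\mathcal{X})$ with $l_{\mathcal{X}}(E)\leq l_{\mathcal{X}}(A)+l_{\mathcal{X}}(C')=l_{\mathcal{X}}(A)+n-1$. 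Then applying the already-established length-one case to $E\rightarrow B\rightarrow X$ gives $B\in\Filt_{\mathscr{C}}(\mathcal{X})$ and $l_{\mathcal{X}}(B)\leq l_{\mathcal{X}}(E)+1\leq l_{\mathcal{X}}(A)+n=l_{\mathcal{X}}(A)+l_{\mathcal{X}}(C)$, completing the induction.

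An alternative, arguably cleaner, route that avoids repeated invocation of (ET4) is to argue directly at the level of filtrations. Fix an $\mathcal{X}$-filtration $0=X_0\to X_1\to\cdots\to X_m=A$ of minimal length $m=l_{\mathcal{X}}(A)$ and an $\mathcal{X}$-filtration $0=Y_0\to Y_1\to\cdots\to Y_n=C$ of minimal length $n=l_{\mathcal{X}}(C)$. One builds a filtration of $B$ by first running up through $A=X_m\hookrightarrow B$ (the inflation in the given $\mathbb{E}$-triangle), and then successively pulling back the tower of $C$ along $B\twoheadrightarrow C$: concretely, set $B_i$ to be the object fitting in the $\mathbb{E}$-triangle $A\to B_i\to Y_i$ obtained by the functoriality of $\mathbb{E}$ (pull back $\delta\in\mathbb{E}(C,A)$ along $Y_i\to C$), so $B_0=A$ and $B_n=B$. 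Using (ET4) (or Lemma \ref{2.5} and the long exact sequences of Proposition \cite[3.3]{Na}) one checks that each $B_{i-1}\to B_i$ is an inflation with cone $\cone(Y_{i-1}\to Y_i)\in\mathcal{X}$. Concatenating the $X$-tower with the $B$-tower yields an $\mathcal{X}$-filtration of $B$ of length $m+n$, giving both conclusions at once.

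The main obstacle I anticipate is purely bookkeeping: getting the orientation of (ET4) versus (ET4)$^{\mathrm{op}}$ right so that the octahedral-type diagram produces conflations with the intended middle objects, and verifying that the connecting morphisms in the constructed tower really are inflations with cones in $\mathcal{X}$ (rather than merely that the relevant objects lie in $\Filt_{\mathscr{C}}(\mathcal{X})$). Both difficulties are resolved by careful use of (ET4) together with the weak-pullback/weak-pushout property recorded in Lemma \ref{2.5} and the exact sequences of \cite[Proposition 3.3]{Na}; no genuinely new idea is needed beyond the inductive structure of $\Filt_{\mathscr{C}}(\mathcal{X})$.
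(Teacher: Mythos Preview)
Your proposal is correct and essentially matches the paper's proof: your ``alternative, cleaner'' route---pulling back the filtration $0=Y_0\to\cdots\to Y_n=C$ along $B\twoheadrightarrow C$ via $(\mathrm{ET4})^{\mathrm{op}}$ to obtain a tower $A=B_0\to\cdots\to B_n=B$ with cones in $\mathcal{X}$, then concatenating with a minimal filtration of $A$---is exactly what the paper does (with the reversed indexing $N_i=B_{n-i}$). Your first, inductive approach is simply the one-step-at-a-time version of the same construction, so there is no substantive difference.
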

\begin{proof} Set $l_{\mathcal{X}}(A)=m$ and $l_{\mathcal{X}}(C)=n$. If $m=0$ or $n=0$, then the result is clear. So we assume that $m,n>0$.
Fix an $\mathcal{X}$-filtration of $A$
\begin{equation}\label{1.1}
0=X_{0}\stackrel{}{\longrightarrow}X_{1}\stackrel{}{\longrightarrow}X_{2}{\longrightarrow}\cdots{\longrightarrow}X_{m}=A.
\end{equation}
If $n=1$, i.e., $C\in\mathcal{X}$, then combining (\ref{1.1}) with the $\mathbb{E}$-triangle $A\stackrel{}{\longrightarrow}B\stackrel{}{\longrightarrow}C\stackrel{}\dashrightarrow$, we obtain that $l_{\mathcal{X}}(B)\leq m+1$. For $n\geq2$, take an $\mathcal{X}$-filtration of $C$
$$0=Y_{0}\stackrel{f_{0}}{\longrightarrow}Y_{1}\stackrel{f_{1}}{\longrightarrow}Y_{2}{\longrightarrow}\cdots\stackrel{f_{n-1}}{\longrightarrow}Y_{n}=C.$$
Now, we can form the following exact commutative diagram
$$\xymatrix{
  A \ar@{=}[d] \ar[r] & N_{1} \ar[d]^{g_{1}} \ar[r] & Y_{n-1} \ar[d]^{f_{n-1}}  \\
  A  \ar[r] & B \ar[d] \ar[r] & C \ar[d] \\
  & \cone(f_{n-1})  \ar@{=}[r] & \cone(f_{n-1}).   }
$$
That is, there exists $g_1: N_{1}\rightarrow B$ such that $\cone(g_{1})=\cone(f_{n-1})\in \mathcal{X}$.
Furthermore, we have the following exact commutative diagram
$$\xymatrix{
  A \ar@{=}[d] \ar[r] & N_{2} \ar[d]^{g_{2}} \ar[r] & Y_{n-2} \ar[d]^{f_{n-2}}  \\
  A  \ar[r] & N_{1} \ar[d] \ar[r] & Y_{n-1} \ar[d] \\
  & \cone(f_{n-2})  \ar@{=}[r] & \cone(f_{n-2})   }
$$
with $\cone(g_{2})=\cone(f_{n-2})\in \mathcal{X}$.

By repeating this process, we obtain a chain
\begin{equation}\label{1.3}
A\stackrel{g_{n}}{\longrightarrow}N_{n-1}\stackrel{g_{n-1}}{\longrightarrow}N_{n-2}\stackrel{}\cdots{\longrightarrow}N_{2}\stackrel{g_{2}}{\longrightarrow}N_{1}\stackrel{g_{1}}{\longrightarrow}B
\end{equation}
such that $\cone(g_{i})=\cone(f_{n-i})\in \mathcal{X}$ for $1\leq i\leq n$. Combining (\ref{1.3}) with (\ref{1.1}), we obtain an $\mathcal{X}$-filtration of $B$ with length $m+n$. Hence $l_{\mathcal{X}}(B)\leq m+n=l_{\mathcal{X}}(A)+l_{\mathcal{X}}(C)$, and $B\in\mathbf{Filt_{\mathscr{C}}(\mathcal{X})}$.
\end{proof}

%\begin{proof} We proceed by induction on $n$. The case of $n=0,1$ is trivial. For the general case, by remark \ref{2.6}, we have $B\in A\ast C\in F_{m}(X)\ast F_{n}(X)=F_{m}(X)\ast F_{n-1}(X)\ast\{X\cup0\}\subseteq F_{m+n-1}(X)\ast\{X\cup0\}=F_{m+n}(X)$.
%\end{proof}

In general, the equation in Lemma \ref{1} does not hold (see Example \ref{4.6}), but it does under certain conditions (see Lemma \ref{T}).
\begin{lemma}\label{2} Let $\mathcal{X}$ be a collection of objects in $\mathscr{C}$, then $\Filt_{\mathscr{C}}(\mathcal{X})$ is the smallest extension-closed subcategory in $\mathscr{C}$ containing $\mathcal{X}$.
\end{lemma}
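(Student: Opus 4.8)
The plan is to prove two things: that $\Filt_{\mathscr{C}}(\mathcal{X})$ is itself extension-closed, and that it is contained in any extension-closed subcategory $\mathcal{D}$ with $\mathcal{X}\subseteq\mathcal{D}$. The first part is almost immediate from Lemma~\ref{1}: given an $\mathbb{E}$-triangle $A\to B\to C\dashrightarrow$ with $A,C\in\Filt_{\mathscr{C}}(\mathcal{X})$, that lemma already tells us $B\in\Filt_{\mathscr{C}}(\mathcal{X})$, so $\Filt_{\mathscr{C}}(\mathcal{X})\ast\Filt_{\mathscr{C}}(\mathcal{X})=\Filt_{\mathscr{C}}(\mathcal{X})$; one should also note $\mathcal{X}\subseteq\Filt_{\mathscr{C}}(\mathcal{X})$, since any $X\in\mathcal{X}$ sits in the filtration $0\to X$ of length $1$ (using the split $\mathbb{E}$-triangle $0\to X\xrightarrow{\;=\;}X\dashrightarrow$, whose inflation has cone $X$).

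For minimality, I would argue by induction on $\mathcal{X}$-length using the inductive description in Remark~\ref{2.6}(1): $\Filt_{\mathscr{C}}(\mathcal{X})=\bigcup_{n}F_n(\mathcal{X})$ with $F_0(\mathcal{X})=0$ and $F_n(\mathcal{X})=F_{n-1}(\mathcal{X})\ast(\mathcal{X}\cup\{0\})$. Let $\mathcal{D}$ be any extension-closed subcategory containing $\mathcal{X}$ (hence containing $0$, as $0$ is a zero object and $\mathcal{D}$ is closed under isomorphism and, being a subcategory in our standing conventions, additive). Then $F_0(\mathcal{X})=0\subseteq\mathcal{D}$, and if $F_{n-1}(\mathcal{X})\subseteq\mathcal{D}$, then for $M\in F_n(\mathcal{X})$ there is an $\mathbb{E}$-triangle $D\to M\to X'\dashrightarrow$ with $D\in F_{n-1}(\mathcal{X})\subseteq\mathcal{D}$ and $X'\in\mathcal{X}\cup\{0\}\subseteq\mathcal{D}$; since $\mathcal{D}$ is extension-closed, $M\in\mathcal{D}\ast\mathcal{D}=\mathcal{D}$. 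Thus $F_n(\mathcal{X})\subseteq\mathcal{D}$ for all $n$, so $\Filt_{\mathscr{C}}(\mathcal{X})\subseteq\mathcal{D}$.

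There is no real obstacle here; the one point requiring a little care is the base-of-induction/closure bookkeeping, namely that an extension-closed subcategory automatically contains a zero object and that $\mathcal{X}\subseteq\Filt_{\mathscr{C}}(\mathcal{X})$, both of which are routine given the conventions of Section~1 and the definition of a filtration in~(\ref{2.2}). Everything else is a direct application of Lemma~\ref{1} and Remark~\ref{2.6}.
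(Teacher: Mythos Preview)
Your proposal is correct and follows essentially the same approach as the paper: extension-closure comes directly from Lemma~\ref{1}, and minimality is handled by induction on the $\mathcal{X}$-length of objects (your use of the $F_n(\mathcal{X})$ filtration from Remark~\ref{2.6}(1) is just a repackaging of this induction). The paper's own proof is a one-line sketch of exactly this argument, so your write-up is simply a more detailed version of it.
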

\begin{proof} By Lemma \ref{1}, $\Filt_{\mathscr{C}}(\mathcal{X})$ is closed under extensions. The minimality can be followed by the induction on lengths of objects in $\Filt_{\mathscr{C}}(\mathcal{X})$.
\end{proof}

We have the following basic observation which will be used frequently in what follows.
\begin{lemma}\label{3} Let $\mathcal{X}$ be a collection of objects in $\mathscr{C}$, $M\in\Filt_{\mathscr{C}}(\mathcal{X})$ with $l_{\mathcal{X}}(M)=n$. Take an $\mathcal{X}$-filtration $0=X_{0}\stackrel{f_{0}}{\longrightarrow}X_{1}\stackrel{f_{1}}{\longrightarrow}X_{2}\stackrel{f_{2}}{\longrightarrow}\cdots\stackrel{f_{n-1}}{\longrightarrow}X_{n}=M$. Then the following statements hold.

$(1)$ $l_{\mathcal{X}}(X_{i})=i$ for $0\leq i\leq n$.

$(2)$ $l_{\mathcal{X}}(\cone(f_{j}f_{j-1}\cdots f_{i}))=j-i+1$ for $0\leq i\leq j\leq n-1$.
\end{lemma}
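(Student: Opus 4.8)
The plan is to prove both statements by induction on the length $n$, using the inductive description of $\Filt_{\mathscr{C}}(\mathcal{X})$ from Remark \ref{2.6} together with Lemma \ref{1}. The key tension is that Lemma \ref{1} only gives the inequality $l_{\mathcal{X}}(B)\leq l_{\mathcal{X}}(A)+l_{\mathcal{X}}(C)$, so to pin down the exact length of a subobject in the filtration I must combine an upper bound (from exhibiting an explicit short $\mathcal{X}$-filtration) with a lower bound (from the minimality of $n=l_{\mathcal{X}}(M)$).

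For part $(1)$, first I would establish the upper bound $l_{\mathcal{X}}(X_i)\leq i$, which is immediate: the truncated chain $0=X_0\to X_1\to\cdots\to X_i$ is itself an $\mathcal{X}$-filtration of $X_i$ of length $i$, since each $f_k$ is an inflation with $\cone(f_k)\in\mathcal{X}$. For the lower bound, suppose toward a contradiction that $l_{\mathcal{X}}(X_i)=:p<i$ for some $i$. The object $M$ is obtained from $X_i$ by a sequence of $n-i$ conflations with cones in $\mathcal{X}$; more precisely, there is an $\mathbb{E}$-triangle $X_i\to M\to \cone(f_{n-1}\cdots f_i)\dashrightarrow$ (one should note, via (ET4)/Remark \ref{2.6}(1), that the composite $f_{n-1}\cdots f_i$ is an inflation and its cone lies in $F_{n-i}(X)$, i.e.\ has $\mathcal{X}$-length $\leq n-i$). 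Applying Lemma \ref{1} to this triangle gives $l_{\mathcal{X}}(M)\leq p+(n-i)<i+(n-i)=n$, contradicting $l_{\mathcal{X}}(M)=n$. Hence $l_{\mathcal{X}}(X_i)=i$.

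For part $(2)$, fix $0\leq i\leq j\leq n-1$ and write $C_{i,j}:=\cone(f_j f_{j-1}\cdots f_i)$, the cone of the composite inflation $X_i\to X_{j+1}$. On the one hand, splicing the conflations $\cone(f_i),\cone(f_{i+1}),\dots,\cone(f_j)$ (each in $\mathcal{X}$) produces an $\mathcal{X}$-filtration of $C_{i,j}$ of length $j-i+1$, so $l_{\mathcal{X}}(C_{i,j})\leq j-i+1$; concretely one builds the tower $0\to \cone(f_i)\to \cone(f_{i+1}f_i)\to\cdots\to\cone(f_j\cdots f_i)=C_{i,j}$ with successive cones $\cone(f_{i+1}),\dots,\cone(f_j)$, using (ET4) repeatedly as in the proof of Lemma \ref{1}. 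On the other hand, there is an $\mathbb{E}$-triangle $X_i\to X_{j+1}\to C_{i,j}\dashrightarrow$, whence by Lemma \ref{1}, $l_{\mathcal{X}}(X_{j+1})\leq l_{\mathcal{X}}(X_i)+l_{\mathcal{X}}(C_{i,j})$; substituting the values $l_{\mathcal{X}}(X_{j+1})=j+1$ and $l_{\mathcal{X}}(X_i)=i$ from part $(1)$ gives $j+1\leq i+l_{\mathcal{X}}(C_{i,j})$, i.e.\ $l_{\mathcal{X}}(C_{i,j})\geq j-i+1$. Combining the two bounds yields $l_{\mathcal{X}}(C_{i,j})=j-i+1$.

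The main obstacle I anticipate is the bookkeeping needed to justify that the composite $f_j f_{j-1}\cdots f_i$ is genuinely an inflation and that its cone is filtered by the intermediate cones $\cone(f_{i}),\ldots,\cone(f_j)$ — this is exactly the associativity of $\ast$ and the octahedron-type axiom (ET4), already invoked in Remark \ref{2.6} and in the proof of Lemma \ref{1}, but it must be applied iteratively and carefully here rather than just once. Once that structural fact is in hand, both parts follow cleanly from the upper-bound/lower-bound sandwich described above, and part $(2)$ is essentially a corollary of part $(1)$ plus one application of Lemma \ref{1}.
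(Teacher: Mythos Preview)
Your proposal is correct and follows essentially the same sandwich strategy as the paper: an upper bound from an explicit $\mathcal{X}$-filtration, and a lower bound forced by the minimality of $l_{\mathcal{X}}(M)=n$ via Lemma~\ref{1}. One small remark: your lower bound for part~(2) is actually cleaner than the paper's --- you apply Lemma~\ref{1} directly to the $\mathbb{E}$-triangle $X_i\to X_{j+1}\to C_{i,j}\dashrightarrow$ and read off $j+1\le i+l_{\mathcal{X}}(C_{i,j})$, whereas the paper takes a detour through $(\mathrm{ET4})^{\mathrm{op}}$ and an auxiliary object $N$ to reach the same inequality.
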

\begin{proof} $(1)$ Clearly, $l_{\mathcal{X}}(X_{i})=i$ holds for $i=0,1$. Assume the assertion is true for $i=k-1$. By Lemma \ref{1}, we obtain $l_{\mathcal{X}}(X_{k})\leq l_{\mathcal{X}}(X_{k-1})+l_{\mathcal{X}}(\cone(f_{k-1}))=k$. Suppose that $l_{\mathcal{X}}(X_{k})<k$, then we can obtain an $\mathcal{X}$-filtration of $M$ with length less than $n$, which contradicts with $l_{\mathcal{X}}(M)=n$. Hence, $l_{\mathcal{X}}(X_{k})=k$. By induction, we finish the proof.

$(2)$ We proceed the proof by induction on $s=j-i$. The case $s=0$ is clear since $l_{\mathcal{X}}(\cone(f_{i}))=1$. For any $0\leq i< j\leq n-1$,  $\rm (ET4)$ yields the following exact
commutative diagram
$$\xymatrix{
  X_{i} \ar@{=}[d] \ar[r] & X_{j}  \ar[d]^{f_{j}} \ar[r] & \cone(f_{j-1}f_{j-2}\cdots f_{i}) \ar[d]\\
  X_{i}  \ar[r] & X_{j+1} \ar[d] \ar[r] & \cone(f_{j}f_{j-1}\cdots f_{i})  \ar[d]\\
  & \cone(f_{j}) \ar@{=}[r] &  \cone(f_{j}) .}
$$
By induction, we obtain that
$$l_{\mathcal{X}}(\cone(f_{j}f_{j-1}\cdots f_{i}))\leq l_{\mathcal{X}}(\cone(f_{j-1}f_{j-2}\cdots f_{i}))+l_{\mathcal{X}}(f_{j})=j-i+1.$$
Set $l_{\mathcal{X}}(\cone(f_{j}f_{j-1}\cdots f_{i}))=m$, take an $\mathcal{X}$-filtration of $\cone(f_{j}f_{j-1}\cdots f_{i})$
$$0=Y_{0}\stackrel{}{\longrightarrow}Y_{1}\stackrel{}{\longrightarrow}Y_{2}{\longrightarrow}\cdots\stackrel{g_{m-1}}{\longrightarrow}Y_{m}=\cone(f_{j}f_{j-1}\cdots f_{i}).$$
By $\rm (ET4)^{op}$, we have the following exact commutative diagram
$$\xymatrix{
  X_{i} \ar@{=}[d] \ar[r] & N  \ar[d] \ar[r] & Y_{m-1}  \ar[d]^{g_{m-1}} \\
  X_{i}  \ar[r] & X_{j+1} \ar[d] \ar[r] &  Y_{m} \ar[d] \\
  &  \cone(g_{m-1})  \ar@{=}[r] & \cone(g_{m-1}). }
$$
Noting that $j+1=l_{\mathcal{X}}(X_{j+1})\leq l_{\mathcal{X}}(N)+1\leq i+m-1+1$, i.e., $m\geq j-i+1$, we obtain that $l_{\mathcal{X}}(\cone(f_{j}f_{j-1}\cdots f_{i}))=j-i+1$.
Therefore, we complete the proof.
\end{proof}

\section{Semibricks}
Recall that an object in an additive category $\mathcal{C}$ is called a {\em brick}, if its endomorphism ring is a division algebra. A set $\mathcal{X}$ of isoclasses of bricks in $\mathcal{C}$ is called a {\em semibrick} if $\Hom_{\mathcal{C}}(X_1,X_2)=0$ for any two non-isomorphic objects $X_1,X_2$ in $\mathcal{X}$.

Let us introduce the notions of simple objects and wide subcategories in an extriangulated category $\mathscr{C}$.
\begin{definition} Let $\mathscr{C}$ be an extriangulated category.

$(a)$ A morphism $f:A\rightarrow B $ in $\mathscr{C}$ is called {\em admissible} if there exists a deflation $h: A\rightarrow C$ and an inflation $g: C\rightarrow B$ in $\mathscr{C}$ such that $f=gh$.

%(b) Extension-closed  subcategory $\mathcal{D}\subseteq \mathscr{C}$ is called $thick$ if $\mathcal{D}$ is closed under cones, cocones and direct summands in $\mathscr{C}$.

$(b)$ A non-zero object $M$ in $\mathscr{C}$ is called a {\em simple} object if there does not exist an $\mathbb{E}$-triangle $A\stackrel{}{\longrightarrow}M\stackrel{}{\longrightarrow}B\stackrel{}\dashrightarrow$ in $\mathscr{C}$ such that $A,B\neq0$.

$(c)$ A set $\mathcal{X}$ of isoclasses of objects in $\mathscr{C}$ is called {\em simple} if $\mathcal{X}\subseteq \cim(\Filt_{\mathscr{C}}(\mathcal{X}))$, where and elsewhere we denote by $\cim(\mathcal {C})$ the collection of isoclasses of simple objects in an extriangulated category $\mathcal {C}$.

%$(d)$ Assume that $\mathscr{C}$ is a triangulated category with the shift functor [1]. A set $\mathcal{X}$ in $\mathscr{C}$ is called {\em shift} if $\mathcal{X}[1]\subseteq \Filt_{\mathscr{C}}(\mathcal{X})$.
\end{definition}

\begin{definition}\label{wide} Let $\mathscr{C}$ be an extriangulated category. A subcategory $\mathcal{D}$ of $\mathscr{C}$ is  {\em wide} if the following conditions hold:

$(a)$ Every morphism $f$ in $\mathcal{D}$ is admissible.

$(b)$ $\mathcal{D}$ is closed under extensions.
\end{definition}
\begin{remark}
By \cite[Remark 2.18]{Na}, in Definition \ref{wide}, $\mathcal{D}$ is an extriangulated category, and the inclusion functor $i:\mathcal{D}\hookrightarrow\mathscr{C}$ is exact in the sense of \cite[Definition 2.31]{Ben}. Note that if $\mathscr{C}$ is an exact category, then the condition $(a)$ holds if and only if $\mathcal{D}$ is an abelian category (cf. \cite[Exercise 8.6]{Bu}). In this case, Definition \ref{wide} coincides with the usual wide subcategory.
\end{remark}
A subcategory $\mathcal{D}$ of an extriangulated category $\mathscr{C}$ is {\em length} if $\mathcal{D}=\Filt_{\mathcal{D}}(sim(\mathcal{D}))$.
Two morphisms $f: A\rightarrow B$ and $g:A'\rightarrow B'$ in $\mathscr{C}$ are said to be {\em isomorphic}, denoted by $f\simeq g$, if there are isomorphisms $x:A\rightarrow A'$ and $y:B\rightarrow B'$ in $\mathscr{C}$ such that $yf=gx$.
\begin{lemma}\label{A} Suppose that $f\simeq g$, then $f$ is an inflation (resp. deflation) if and only if $g$ is an inflation (resp. deflation).
\end{lemma}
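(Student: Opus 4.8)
The plan is to reduce the statement to the single observation that the \emph{middle} term of an $\mathbb{E}$-triangle carries no nontrivial endomorphism fixing the two outer maps, and then to transport conflations along the given isomorphisms. So I would first prove: if $A\xrightarrow{p}B\xrightarrow{q}C\stackrel{\delta}\dashrightarrow$ is an $\mathbb{E}$-triangle and $b\in\mathscr{C}(B,B)$ satisfies $bp=p$ and $qb=q$, then $b$ is an automorphism of $B$. This follows from the long exact sequence $\mathscr{C}(C,-)\to\mathscr{C}(B,-)\to\mathscr{C}(A,-)$ of \cite[Proposition~3.3]{Na} evaluated at $B$: since $(b-\mathrm{id}_B)p=0$, exactness gives $b-\mathrm{id}_B=\psi q$ for some $\psi\in\mathscr{C}(C,B)$, and then $q(b-\mathrm{id}_B)=0$ forces $q\psi q=0$, whence $(\psi q)^2=\psi(q\psi q)=0$; thus $b=\mathrm{id}_B+\psi q$ is invertible with inverse $\mathrm{id}_B-\psi q$.

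Now assume $f\simeq g$ via isomorphisms $x\in\mathscr{C}(A,A')$, $y\in\mathscr{C}(B,B')$ with $yf=gx$, so $g=yfx^{-1}$, and suppose $f$ is an inflation, say $\mathfrak{s}(\delta)=[A\xrightarrow{f}B\xrightarrow{p}C]$ with $\delta\in\mathbb{E}(C,A)$. Set $\delta':=x_{\ast}\delta\in\mathbb{E}(C,A')$; biadditivity of $\mathbb{E}$ makes both $(x,\mathrm{id}_C)\colon\delta\to\delta'$ and $(x^{-1},\mathrm{id}_C)\colon\delta'\to\delta$ morphisms of $\mathbb{E}$-extensions. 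Writing $\mathfrak{s}(\delta')=[A'\xrightarrow{u}V\xrightarrow{v}C]$, the realization axiom supplies $b_0\in\mathscr{C}(B,V)$ with $b_0f=ux$, $vb_0=p$, and $b_1\in\mathscr{C}(V,B)$ with $b_1u=fx^{-1}$, $pb_1=v$. Then $b_1b_0$ fixes the outer maps of $[A\xrightarrow{f}B\xrightarrow{p}C]$ and $b_0b_1$ fixes the outer maps of $[A'\xrightarrow{u}V\xrightarrow{v}C]$, so by the observation both are automorphisms, and hence $b_0$ is an isomorphism. Finally $\beta:=b_0y^{-1}\colon B'\to V$ is an isomorphism with $\beta g=b_0fx^{-1}=u$, so the sequence $A'\xrightarrow{g}B'\xrightarrow{v\beta}C$ is equivalent to the conflation $[A'\xrightarrow{u}V\xrightarrow{v}C]$, hence is itself a conflation; in particular $g$ is an inflation. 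Since $yf=gx$ also gives $y^{-1}g=fx^{-1}$, i.e.\ $g\simeq f$ via $(x^{-1},y^{-1})$, the reverse implication is the same statement applied to $g$.

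The deflation case is dual: if $f$ is a deflation, realized by $K\xrightarrow{k}A\xrightarrow{f}B$ with $\delta\in\mathbb{E}(B,K)$, I would set $\delta':=(y^{-1})^{\ast}\delta\in\mathbb{E}(B',K)$, use the morphisms of extensions $(\mathrm{id}_K,y^{-1})\colon\delta'\to\delta$ and $(\mathrm{id}_K,y)\colon\delta\to\delta'$, run the same argument with the realization axiom to obtain an isomorphism $b_0$ between the middle terms, and read off from $fb_0=y^{-1}w$ that $K\xrightarrow{xk}A'\xrightarrow{g}B'$ (suitably equivalent to $\mathfrak{s}(\delta')$) is a conflation, so that $g$ is a deflation; the reverse implication is again obtained by symmetry.

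The only genuinely fiddly part is the bookkeeping: one must keep track of which group $\mathbb{E}(-,-)$ each extension belongs to, verify that the displayed pairs really are morphisms of $\mathbb{E}$-extensions (each reducing to an identity of the form $x_{\ast}\delta=\mathrm{id}_C^{\ast}\delta'$), and keep straight the directions of the comparison morphisms $b_0,b_1$ delivered by the realization axiom. No use of $\mathrm{(ET3)}$ or $\mathrm{(ET4)}$ is needed beyond what is already built into the notion of an additive realization; the whole content is simply that ``being a conflation'' is stable under isomorphisms of the two outer terms, which the auxiliary observation upgrades to the automorphy of the middle comparison map.
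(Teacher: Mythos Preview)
Your proof is correct. The paper's own proof is the single sentence ``It is easily proved by \cite[Proposition~3.7]{Na}'', which records exactly the fact you have reproved from scratch: that an $\mathbb{E}$-triangle can be transported along isomorphisms of its outer terms, so that the first (resp.\ second) map in the new sequence is again an inflation (resp.\ deflation). Your auxiliary observation about the automorphy of the middle comparison map is precisely the mechanism underlying that cited proposition, and your transport argument via $\delta':=x_{\ast}\delta$ then applies it in the same way the paper intends. In short, the two approaches are the same in spirit; you have simply unpacked the citation into a self-contained argument using only the realization axiom and the long exact $\mathscr{C}(-,-)$ sequence, whereas the paper delegates this to Nakaoka--Palu.
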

\begin{proof} It is easily proved by \cite[Proposition 3.7]{Na}.
%By hypothesis, we consider the following commutative diagram
%$$\xymatrix{
%  A \ar[d]_-{x} \ar[r]^-{f} & B \ar[d]_-{y} \ar[r]^-{v} & C \ar@{=}[d] \ar@{-->}[r]^{\delta} &  \\
%  A' \ar[r]^-{g} & B' \ar[r]^-{vy^{-1}} & C' &  }$$
%where the vertical morphisms are isomorphisms. By \cite[Proposition 3.7]{Na}, we know that $A'\stackrel{fx^{-1}}{\longrightarrow}B\stackrel{v}{\longrightarrow}C'\stackrel{x_{\ast}\delta}\dashrightarrow$ is an $\mathbb{E}$-triangle in $\mathscr{C}$. It follows that  $A'\stackrel{g}{\longrightarrow}B'\stackrel{vy^{-1}}{\longrightarrow}C'\stackrel{x_{\ast}\delta}\dashrightarrow$ is an $\mathbb{E}$-triangle in $\mathscr{C}$.
\end{proof}

%The proof of the following lemma is not too far from exact category case, compare \cite[Lemma 3.4]{En}.
\begin{lemma}\label{4}  Let $\mathscr{C}$ be an extriangulated category. Let $\mathcal{X}$ be a semibrick in $\mathscr{C}$ and $f:X\rightarrow M$ be a morphism in $\Filt_{\mathscr{C}}(\mathcal{X})$ with $X\in\mathcal{X}$. Then $f=0$ or $f$ is an inflation such that $l_{\mathcal{X}}(\cone(f))=l_{\mathcal{X}}(M)-1$.
\end{lemma}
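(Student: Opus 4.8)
The plan is to induct on $n = l_{\mathcal{X}}(M)$, using the inductive characterization of $\Filt_{\mathscr{C}}(\mathcal{X})$ from Remark \ref{2.6}(2), together with the fact that $\mathcal{X}$ is a semibrick (so $X$ is a brick and is Hom-orthogonal to all other members of $\mathcal{X}$). Concretely, I fix an $\mathcal{X}$-filtration written in the deflation form
$$M = Y_n \stackrel{g_{n-1}}{\longrightarrow} Y_{n-1} \longrightarrow \cdots \stackrel{g_0}{\longrightarrow} Y_0 = 0,$$
so there is an $\mathbb{E}$-triangle $X' \stackrel{}{\longrightarrow} M \stackrel{g_{n-1}}{\longrightarrow} Y_{n-1} \stackrel{}\dashrightarrow$ with $X' = \cocone(g_{n-1}) \in \mathcal{X}$ and $l_{\mathcal{X}}(Y_{n-1}) = n-1$ (by Lemma \ref{3}(1) applied to the dual filtration). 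Composing $f$ with $g_{n-1}$ gives a morphism $g_{n-1}f : X \to Y_{n-1}$ lying in $\Filt_{\mathscr{C}}(\mathcal{X})$, to which the induction hypothesis applies.

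The base case $n=0$ forces $M=0$ and $f=0$; the case $n=1$ means $M\in\mathcal{X}$, and then $f=0$ or $f$ is an isomorphism (hence an inflation with $\cone(f)=0$) because $\mathcal{X}$ is a semibrick: if $X\cong M$ then $\End(X)$ being a division algebra makes every nonzero $f$ invertible, and if $X\not\cong M$ then $\mathscr{C}(X,M)=0$. For the inductive step I split into two cases according to whether $g_{n-1}f = 0$. If $g_{n-1}f=0$, then by the exact sequence $\mathscr{C}(X,X')\to\mathscr{C}(X,M)\to\mathscr{C}(X,Y_{n-1})$ coming from Proposition \ref{2.5}... (the long exact sequence in the first variable), $f$ factors as $f = x' \circ u$ for some $u\in\mathscr{C}(X,X')$; since $X,X'\in\mathcal{X}$ and $\mathcal{X}$ is a semibrick, $u$ is either $0$ (so $f=0$) or an isomorphism, in which case $f$ is isomorphic to the inflation $x': X'\to M$, hence an inflation by Lemma \ref{A}, with $\cone(f)\cong Y_{n-1}$ so $l_{\mathcal{X}}(\cone(f)) = n-1$. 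If $g_{n-1}f\ne 0$, then by the induction hypothesis $g_{n-1}f$ is an inflation with $l_{\mathcal{X}}(\cone(g_{n-1}f)) = (n-1)-1 = n-2$; in particular $f\ne 0$, and I must promote "$g_{n-1}f$ is an inflation" to "$f$ is an inflation" and compute $l_{\mathcal{X}}(\cone(f))$.

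The main obstacle is precisely this last case: deducing that $f$ itself is an inflation from the fact that $g_{n-1}\circ f$ is. The idea is to build a commutative diagram using (ET4)-type octahedral axioms. Consider the inflation $\iota: X \hookrightarrow M'$ realizing a chosen $\mathbb{E}$-triangle with $\cone(\iota)\in\mathcal{X}$-filtered part — more precisely, I would use (ET3)$^{op}$/(ET4)$^{op}$ applied to the deflation $g_{n-1}: M\to Y_{n-1}$ and the inflation $X \to \cone(g_{n-1}f)$ underlying $g_{n-1}f$ to produce an object $E$ and an exact commutative diagram with rows and columns conflations, fitting $X$, $M$, $X' = \cocone(g_{n-1})$, and $\cone(g_{n-1}f)$ into an octahedron; reading off the appropriate row shows $f$ factors as a deflation followed by an inflation, and in fact that $f$ is an inflation with $\cocone$ of a deflation out of $M$ being $X'$-filtered. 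Then $l_{\mathcal{X}}(\cone(f))$ is computed by applying Lemma \ref{1} and Lemma \ref{3} to the resulting conflations relating $\cone(f)$, $\cone(g_{n-1}f)$, and $X'=\cone(\iota')$: one gets $l_{\mathcal{X}}(\cone(f)) \le l_{\mathcal{X}}(\cone(g_{n-1}f)) + l_{\mathcal{X}}(X') = (n-2)+1 = n-1$, and the reverse inequality $l_{\mathcal{X}}(\cone(f)) \ge l_{\mathcal{X}}(M) - l_{\mathcal{X}}(X) = n-1$ from Lemma \ref{1} applied to the $\mathbb{E}$-triangle $X \to M \to \cone(f)\dashrightarrow$. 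Combining the two gives the claimed equality, completing the induction.
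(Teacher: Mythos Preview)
Your induction scheme and Case~1 ($g_{n-1}f=0$) are correct, but you have chosen the opposite filtration step from the paper, and this makes your Case~2 substantially harder. The paper takes an $\mathbb{E}$-triangle $Y_{n-1}\stackrel{y}{\to} M\stackrel{x}{\to} N$ with $N\in\mathcal{X}$ (quotient in $\mathcal{X}$), so that the composite $xf:X\to N$ lies between two objects of the semibrick and is therefore zero or an isomorphism \emph{immediately}. When $xf=0$, the map $f$ factors through $Y_{n-1}$ and induction applies to that factor $z:X\to Y_{n-1}$; since $z$ is then an inflation, $f=yz$ is an inflation by $(\mathrm{ET4})$. When $xf$ is an isomorphism the triangle splits and Lemma~\ref{A} finishes. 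Either way, passing from the composite back to $f$ costs one line.

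In your setup the composite $g_{n-1}f$ lands in a length-$(n-1)$ object, so you only learn by induction that $g_{n-1}f$ is an inflation; you must then lift this to ``$f$ is an inflation'', and your sketch of that lift is muddled (there is no natural inflation $X\to\cone(g_{n-1}f)$, and $f$ does not factor as ``deflation followed by inflation'' here). The step \emph{can} be carried out: apply $(\mathrm{ET4})^{\mathrm{op}}$ to the conflations $X'\to M\stackrel{g_{n-1}}{\to} Y_{n-1}$ and $X\stackrel{g_{n-1}f}{\to} Y_{n-1}\to C$ to obtain conflations $X'\to K\stackrel{\pi}{\to} X$ and $K\stackrel{j}{\to} M\to C$ with $g_{n-1}j=(g_{n-1}f)\pi$; the dual of Lemma~\ref{2.5} makes the square $(K,X,M,Y_{n-1})$ a weak pullback, so the compatible pair $(f,1_X)$ lifts to $s:X\to K$ with $js=f$ and $\pi s=1_X$. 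Then $s$ is a split inflation and $f=js$ is a composite of inflations, hence an inflation; a further application of $(\mathrm{ET4})$ gives a conflation $X'\to\cone(f)\to C$, and your two inequalities for $l_{\mathcal{X}}(\cone(f))$ then go through. So your route works once this pullback argument is supplied, but the paper's choice of quotient-in-$\mathcal{X}$ filtration avoids the whole difficulty.
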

\begin{proof} We proceed the proof by induction on $l_{\mathcal{X}}(M)=n$. The cases of $n=0,1$ are trivial. Let us firstly deal with the case of $n=2$. Consider the following diagram
\begin{equation}\label{3.1}\xymatrix{
   & X\ar[d]_{f} \ar[dr]^{xf} & &\\
  Y_{1} \ar[r]^{y} & M \ar[r]^{x} & N \ar@{-->}[r]^{\delta}& }\end{equation}
with $Y_{1},N\in\mathcal{X}$. Assume that $f$ is non-zero. Observe that $xf$ is zero or $xf$ is an isomorphism since $X,N\in\mathcal{X}$. For the former, there exists a morphism $z:X\rightarrow Y_{1}$ such that $f=yz$. It should be noted that $z$ is an isomorphism since $f$ is non-zero. By \cite[Proposition 3.7]{Na}, we know that $X\stackrel{yz}{\longrightarrow}M\stackrel{x}{\longrightarrow}N\stackrel{(z^{-1})^{\ast}\delta}\dashrightarrow$ is an $\mathbb{E}$-triangle. Thus, $f=yz$ is an inflation, and $l_{\mathcal{X}}(\cone(f))=l_{\mathcal{X}}(N)=1=l_{\mathcal{X}}(M)-1$. For the latter, then the $\mathbb{E}$-triangle in (\ref{3.1}) is splitting. Therefore there exists an isomorphism $\footnotesize\begin{pmatrix} a \\x \end{pmatrix}:M\rightarrow Y_{1}\oplus N$. We can verify directly that $f\simeq\footnotesize\begin{pmatrix} af \\xf \end{pmatrix}\simeq\footnotesize\begin{pmatrix} af \\1 \end{pmatrix}\simeq\footnotesize\begin{pmatrix} 0 \\1 \end{pmatrix}: X\rightarrow Y_{1}\oplus X$ as morphisms. By Lemma \ref{A}, $f$ is an inflation, and $l_{\mathcal{X}}(\cone(f))=l_{\mathcal{X}}(Y_1)=1=l_{\mathcal{X}}(M)-1$.

Now we consider the case of $n\geq2$. Consider the following commutative diagram
\begin{equation}\label{3.2}\xymatrix{
   & X\ar[d]_{f} \ar[dr]^{xf} & &\\
  Y_{n-1} \ar[r]^{y} & M \ar[r]^{x} & N \ar@{-->}[r]^{\delta}& }\end{equation}
with $l_{\mathcal{X}}(Y_{n-1})=n-1$ and $l_{\mathcal{X}}(N)=1$. If $xf$ is an isomorphism, the assertion can be proved by repeating the latter process in the case of $n=2$. If $xf=0$, there exists $z:X\rightarrow Y_{n-1}$ such that $f=yz$. By induction, $z$ is an inflation such that $l_{\mathcal{X}}(\cone(z))=n-2$. Applying $\rm(ET4)$, we have the following exact commutative diagram
$$\xymatrix{
  X \ar@{=}[d] \ar[r]^{z} & Y_{n-1} \ar[d]^{y} \ar[r] & \cone(z) \ar[d] \\
  X \ar[r]^{f} & M \ar[d] \ar[r] &\cone(f) \ar[d] \\
   & N\ar@{=}[r] & N .  }
$$
Thus, $f$ is an inflation.
By Lemma \ref{1}, $l_{\mathcal{X}}(\cone(f))\leq l_{\mathcal{X}}(\cone(z))+l_{\mathcal{X}}(N)=n-1$. On the other hand, $n=l_{\mathcal{X}}(M)\leq 1+l_{\mathcal{X}}(\cone(f))$, i.e., $l_{\mathcal{X}}(\cone(f))\geq n-1$. Hence, $l_{\mathcal{X}}(\cone(f))= n-1=l_{\mathcal{X}}(M)-1$. This finishes the proof.
\end{proof}

\begin{corollary}\label{3.5} Let $\mathscr{C}$ be an extriangulated category. Let $\mathcal{X}$ be a semibrick in $\mathscr{C}$ and $f:M\rightarrow X$ be a morphism in $\Filt_{\mathscr{C}}(\mathcal{X})$ with $X\in\mathcal{X}$. Then $f=0$ or $f$ is a deflation such that $l_{\mathcal{X}}(\cocone(f))=l_{\mathcal{X}}(M)-1$.
\end{corollary}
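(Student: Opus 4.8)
The plan is to deduce this corollary from Lemma \ref{4} by passing to the opposite category. Recall from \cite{Na} that if $(\mathscr{C},\mathbb{E},\mathfrak{s})$ is an extriangulated category, then so is $(\mathscr{C}^{op},\mathbb{E}^{op},\mathfrak{s}^{op})$, where $\mathbb{E}^{op}(C,A)=\mathbb{E}(A,C)$; under this duality inflations and deflations are interchanged, conflations $A\to B\to C$ become conflations $C\to B\to A$, and for an inflation $x$ in $\mathscr{C}^{op}$ one has $\cone_{\mathscr{C}^{op}}(x)=\cocone_{\mathscr{C}}(x)$. The first observation is that the notion of a semibrick is self-dual: $\mathcal{X}$ is a semibrick in $\mathscr{C}$ if and only if it is a semibrick in $\mathscr{C}^{op}$, since ``$\End X$ is a division algebra'' and ``$\Hom(X_1,X_2)=0$ for $X_1\not\simeq X_2$'' are conditions unchanged by $\mathscr{C}\leftrightarrow\mathscr{C}^{op}$.

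Next I would check that filtration subcategories and $\mathcal{X}$-lengths are preserved under this duality, which is exactly the content of Remark \ref{2.6}$(2)$: an object $M$ admits an $\mathcal{X}$-filtration of length $n$ in $\mathscr{C}$ (a chain of inflations with cones in $\mathcal{X}$) if and only if it admits a chain $M=Y_n\to Y_{n-1}\to\cdots\to Y_0=0$ of deflations with cocones in $\mathcal{X}$, i.e.\ an $\mathcal{X}$-filtration of length $n$ in $\mathscr{C}^{op}$. Hence $\Filt_{\mathscr{C}}(\mathcal{X})=\Filt_{\mathscr{C}^{op}}(\mathcal{X})$ as subcategories, and $l_{\mathcal{X}}$ is the same function whether computed in $\mathscr{C}$ or in $\mathscr{C}^{op}$.

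With these identifications in place, the argument is immediate: given a morphism $f\colon M\to X$ in $\Filt_{\mathscr{C}}(\mathcal{X})$ with $X\in\mathcal{X}$, regard it as a morphism $f\colon X\to M$ in $\Filt_{\mathscr{C}^{op}}(\mathcal{X})$ with $X\in\mathcal{X}$, and apply Lemma \ref{4} in $\mathscr{C}^{op}$. This yields that either $f=0$, or $f$ is an inflation in $\mathscr{C}^{op}$ --- equivalently, by Lemma \ref{A} read in the opposite category, a deflation in $\mathscr{C}$ --- with $l_{\mathcal{X}}\!\left(\cone_{\mathscr{C}^{op}}(f)\right)=l_{\mathcal{X}}(M)-1$. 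Since $\cone_{\mathscr{C}^{op}}(f)=\cocone_{\mathscr{C}}(f)$, this is precisely the assertion $l_{\mathcal{X}}(\cocone(f))=l_{\mathcal{X}}(M)-1$.

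The only point requiring any care is verifying that the dictionary between $\mathscr{C}$ and $\mathscr{C}^{op}$ carries each ingredient of Lemma \ref{4} to its counterpart; but all of these pieces (self-duality of semibricks, the $\ast$-description of $\Filt$, the interchange of $\cone$ and $\cocone$) are already available in the excerpt, so I do not expect a genuine obstacle. As an alternative, one could instead rerun the proof of Lemma \ref{4} verbatim with all arrows reversed --- starting from a deflation $M\to Y_{n-1}$ with cocone in $\mathcal{X}$ and splitting into the cases where the induced map from the cocone into $X$ is an isomorphism or zero --- but the opposite-category route avoids repeating that bookkeeping.
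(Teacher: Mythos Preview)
Your proposal is correct and follows essentially the same approach as the paper: the paper's proof is the single line ``It is proved dually by using Remark \ref{2.6} and Lemma \ref{A},'' and your write-up is a careful unpacking of exactly that duality argument. One small remark: the passage from ``inflation in $\mathscr{C}^{op}$'' to ``deflation in $\mathscr{C}$'' is immediate from the definition of the opposite extriangulated structure, so Lemma \ref{A} is not really needed at that step; its role (as in the proof of Lemma \ref{4}) is rather inside the dualized argument, to handle the splitting case.
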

\begin{proof}It is proved dually by using Remark \ref{2.6} and Lemma \ref{A}.\end{proof}

\section{Semibricks and wide subcategories}
In this section, let $\mathscr{C}$ be always an extriangulated category.
Let us state the first main result in this paper as the following
\begin{theorem}\label{main}
Let $\mathscr{C}$ be an extriangulated category. The assignments $\mathcal{X}\mapsto\Filt_{\mathscr{C}}(\mathcal{X})$ and $\mathcal{D}\mapsto\cim(\mathcal{D})$ give one-to-one
correspondence between the following two classes.

$(1)$ The class of simple semibricks $\mathcal{X}$ in $\mathscr{C}$.

$(2)$ The class of length wide subcategories $\mathcal{D}$ of $\mathscr{C}$.
\end{theorem}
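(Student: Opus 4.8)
The plan is to show that the two assignments are mutually inverse bijections by verifying three things: (i) if $\mathcal{X}$ is a simple semibrick then $\Filt_{\mathscr{C}}(\mathcal{X})$ is a length wide subcategory; (ii) if $\mathcal{D}$ is a length wide subcategory then $\cim(\mathcal{D})$ is a simple semibrick; and (iii) the round-trips $\mathcal{X}\mapsto\cim(\Filt_{\mathscr{C}}(\mathcal{X}))$ and $\mathcal{D}\mapsto\Filt_{\mathscr{C}}(\cim(\mathcal{D}))$ are the identity. The key technical input for (i) and (iii) is Lemma~\ref{4} and Corollary~\ref{3.5}: they say that any nonzero morphism in $\Filt_{\mathscr{C}}(\mathcal{X})$ out of, or into, an object of a semibrick $\mathcal{X}$ is automatically an inflation (resp. deflation) that drops the $\mathcal{X}$-length by exactly one.

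First I would prove (i). Let $\mathcal{X}$ be a simple semibrick and write $\mathcal{T}=\Filt_{\mathscr{C}}(\mathcal{X})$. By Lemma~\ref{2}, $\mathcal{T}$ is extension-closed, so condition (b) of Definition~\ref{wide} holds. For condition (a), given a morphism $f\colon A\to B$ in $\mathcal{T}$, I would induct on $l_{\mathcal{X}}(A)+l_{\mathcal{X}}(B)$: pick a conflation $A'\to A\to X$ with $X\in\mathcal{X}$ (using Remark~\ref{2.6}(2)), consider the composite $A'\to A\xrightarrow{f}B$, and feed this through Corollary~\ref{3.5} applied to the component $A\to X$ together with Lemma~\ref{4} to factor $f$ through a deflation followed by an inflation. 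Concretely, one uses an $\mathcal{X}$-filtration of $A$ and of $B$ and repeatedly applies (ET4)/(ET4)$^{\mathrm{op}}$ to peel off simple layers, using that every map from a layer of $A$ into $B$ is an inflation or zero and every map into a layer of $B$ is a deflation or zero; assembling these gives the required factorization $f=gh$ with $h$ a deflation and $g$ an inflation. That $\mathcal{T}$ is length is then the statement $\mathcal{T}=\Filt_{\mathcal{T}}(\cim(\mathcal{T}))$: the inclusion $\mathcal{X}\subseteq\cim(\mathcal{T})$ is the hypothesis that $\mathcal{X}$ is simple, so $\mathcal{T}=\Filt_{\mathscr{C}}(\mathcal{X})\subseteq\Filt_{\mathcal{T}}(\cim(\mathcal{T}))\subseteq\mathcal{T}$, and conflations in $\mathcal{T}$ are conflations in $\mathscr{C}$ so the two $\Filt$'s agree.

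Next, for the round-trip (iii) on the semibrick side, I must show $\cim(\Filt_{\mathscr{C}}(\mathcal{X}))=\mathcal{X}$. The inclusion $\supseteq$ is again the simplicity hypothesis. For $\subseteq$: if $S\in\mathcal{T}$ is simple, take an $\mathcal{X}$-filtration of $S$; its bottom layer gives an inflation $X\to S$ with $X\in\mathcal{X}$ nonzero, which by Lemma~\ref{4} is an inflation whose cone has $\mathcal{X}$-length $l_{\mathcal{X}}(S)-1$. Simplicity of $S$ forces either $X\cong S$ or $\cone(X\to S)=0$; in either case $l_{\mathcal{X}}(S)=1$ and $S\cong X\in\mathcal{X}$. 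For (iii) on the subcategory side I must show $\Filt_{\mathscr{C}}(\cim(\mathcal{D}))=\mathcal{D}$ for a length wide $\mathcal{D}$; ``$\subseteq$'' holds because $\mathcal{D}$ is extension-closed and contains $\cim(\mathcal{D})$, and ``$\supseteq$'' is precisely the defining property $\mathcal{D}=\Filt_{\mathcal{D}}(\cim(\mathcal{D}))$ of a length subcategory. Finally, (ii): for $\mathcal{D}$ length wide, I would check that each $S\in\cim(\mathcal{D})$ is a brick — a nonzero endomorphism $\varphi\colon S\to S$ in $\mathcal{D}$ is admissible, so $\varphi=gh$ with $h$ a deflation and $g$ an inflation; since $S$ is simple the object in the middle must be $0$ or $S$, forcing $h$ or $g$ to be an isomorphism, hence (using that $\End S$ is local by Krull--Schmidt, or a direct argument) $\varphi$ is an isomorphism — and that $\Hom(S,S')=0$ for non-isomorphic simples by the same factorization argument applied in $\mathcal{D}=\Filt_{\mathcal{D}}(\cim(\mathcal{D}))$ together with Lemma~\ref{4}; simplicity of the set $\cim(\mathcal{D})$ is immediate since $\Filt_{\mathcal{D}}(\cim(\mathcal{D}))=\mathcal{D}$.

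The main obstacle I anticipate is part (i), condition (a): proving that \emph{every} morphism in $\Filt_{\mathscr{C}}(\mathcal{X})$ is admissible. Lemma~\ref{4} and Corollary~\ref{3.5} only handle morphisms with one endpoint in $\mathcal{X}$, so the general case requires a careful double induction on the $\mathcal{X}$-lengths of source and target, threading a morphism through filtrations of both and invoking (ET4) and (ET4)$^{\mathrm{op}}$ to build the deflation–inflation factorization layer by layer while controlling lengths via Lemma~\ref{3}. Keeping the bookkeeping of these iterated octahedra correct — and checking that the intermediate object one factors through indeed lies in $\mathcal{T}$ — is the delicate step; everything else reduces to the length arithmetic already established in Section~2 and the Hom-orthogonality of the semibrick.
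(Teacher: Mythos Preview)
Your proposal is correct and follows essentially the same route as the paper: the hard step (i) is isolated there as Proposition~\ref{4.5}, proved by induction on $l_{\mathcal{X}}(M)+l_{\mathcal{X}}(N)$ using Lemma~\ref{4}, Corollary~\ref{3.5} and repeated (ET4)/(ET4)$^{\mathrm{op}}$ diagrams exactly as you outline, while (ii), (iii) are dispatched in a few lines via admissibility and Lemma~\ref{2}. The only organizational difference is that the paper phrases the induction hypothesis as a dichotomy---$f$ either factors through some $X\in\mathcal{X}$ or decomposes as deflation--isomorphism--inflation with strictly smaller intermediate $\mathcal{X}$-length---and first checks the cases $l_{\mathcal{X}}\le 3$ in a separate lemma (Lemma~\ref{3.8}) before the general step.
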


Before proving Theorem \ref{main}, we need some preparations.

\begin{lemma}\label{3.8}Let $\mathcal{X}$ be a semibrick in $\mathscr{C}$, and $f:M\rightarrow N$ be a nonzero morphism in $\Filt_{\mathscr{C}}(\mathcal{X})$.

$(1)$ If $l_{\mathcal{X}}(M)=l_{\mathcal{X}}(N)=2$, then either $f$ factors through some $X\in\mathcal{X}$ or $f$ is an isomorphism.

$(2)$ If $l_{\mathcal{X}}(M)=2$, then either $f$ factors through some $X\in\mathcal{X}$ or $f=f_{2}f_{1}$ with $f_{1}$ being an isomorphism and $f_{2}$ being an inflation.

$(2')$ If $l_{\mathcal{X}}(N)=2$, then either $f$ factors through some $X\in\mathcal{X}$ or $f=g_{2}g_{1}$ with $g_{1}$ being a deflation and $g_{2}$ being an isomorphism.

$(3)$ If $l_{\mathcal{X}}(M)=l_{\mathcal{X}}(N)=3$, then $f$ factors through some $X\in\mathcal{X}$, or $f$ is an isomorphism, or $f=f_{3}f_{2}f_{1}$, where $f_{1}:M\rightarrow W_{1}$ is a deflation, $f_{2}:W_{1}\rightarrow W_{2}$ is an isomorphism, $f_{3}:W_{2}\rightarrow N$ is an inflation, and $l_{\mathcal{X}}(W_{2})<l_{\mathcal{X}}(N)$.
\end{lemma}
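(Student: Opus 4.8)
The plan is to prove the four parts in sequence, using Lemma \ref{4} and Corollary \ref{3.5} repeatedly and building each later part on the earlier ones. Throughout, the key structural fact is that any nonzero morphism $X\to X'$ between objects of $\mathcal{X}$ is an isomorphism (since $\mathcal{X}$ is a semibrick), and that by Lemma \ref{4} (resp. Corollary \ref{3.5}) a nonzero morphism out of (resp. into) an object of $\mathcal{X}$ inside $\Filt_{\mathscr{C}}(\mathcal{X})$ is automatically an inflation (resp. deflation) with a controlled cone-length.

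For $(1)$: pick $\mathcal{X}$-filtrations, giving $\mathbb{E}$-triangles $X_1\to M\xrightarrow{p} X_2'$ and $X_1'\to N\xrightarrow{q} X_2''$ with all $X_i$ in $\mathcal{X}$. Consider $qf: M\to X_2''$. By Corollary \ref{3.5} either $qf=0$ or $qf$ is a deflation with $l_{\mathcal{X}}(\cocone(qf))=1$. If $qf=0$ then $f$ factors through the inflation $X_1'\to N$; composing with $p$ appropriately and applying Lemma \ref{4} / Corollary \ref{3.5} to the resulting maps $X_1\to N$, $M\to X_1'$ produces a factorization through an object of $\mathcal{X}$ (one checks the intermediate map $X_1\to X_1'$ is $0$ or iso; in either case $f$ factors through some $X\in\mathcal{X}$). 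If $qf$ is a deflation, then $\cocone(qf)$ has $\mathcal{X}$-length $1$, so $qf\colon M\to X_2''$ restricts to a map on $X_1\hookrightarrow M$; chasing this through $\rm(ET4)$/$\rm(ET4)^{op}$ and using that the induced maps between the length-one subquotients in $\mathcal{X}$ are iso-or-zero, one forces $f$ itself to be an isomorphism (a length count: $\cocone(qf)$ maps into $M$ with cone $X_2''$, and comparing with $X_1\to M$ shows $f$ is bijective on associated graded pieces, hence iso by Lemma \ref{A}-type arguments together with the five-lemma for the long exact sequences of Proposition \ref{2.5}'s preceding proposition).

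For $(2)$ and $(2')$: these are the "one-sided" versions. For $(2)$, take the $\mathbb{E}$-triangle $X_1\to M\xrightarrow{p}X_2'$ with $X_i\in\mathcal{X}$, and an arbitrary $N$ with $l_{\mathcal{X}}(N)=n$. Look at $f$ restricted along the inflation $X_1\to M$: the composite $X_1\to N$ is $0$ or, by Lemma \ref{4}, an inflation. If it is nonzero (an inflation), then via $\rm(ET4)$ one gets an induced map $X_2'\cong M/X_1 \to N/X_1 =\cone(X_1\to N)$, which is a morphism from an object of $\mathcal{X}$ into $\Filt_{\mathscr{C}}(\mathcal{X})$, hence again $0$ or an inflation by Lemma \ref{4}; splicing the two inflations gives that $f$ factors as (iso)$\circ$(inflation), i.e.\ $f=f_2f_1$ with $f_1$ iso and $f_2$ an inflation after possibly rewriting $M$ up to isomorphism. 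If the composite $X_1\to N$ is $0$, then $f$ factors through $p\colon M\to X_2'$, i.e.\ through an object of $\mathcal{X}$. Part $(2')$ is the formal dual, proved by replacing Lemma \ref{4} with Corollary \ref{3.5}, deflations with inflations, and using Remark \ref{2.6} to switch between the two descriptions of $\mathcal{X}$-filtrations; so I would just say "dually" after fixing notation.

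For $(3)$: here $l_{\mathcal{X}}(M)=l_{\mathcal{X}}(N)=3$. Fix a filtration-deflation $M\xrightarrow{d} X$, $X\in\mathcal{X}$, with $\cocone(d)=:M'$ of $\mathcal{X}$-length $2$ (using Remark \ref{2.6}(2)), and a filtration-inflation $X'\xrightarrow{\iota} N$, $X'\in\mathcal{X}$, with $\cone(\iota)=:N''$ of $\mathcal{X}$-length $2$. Restrict $f$ to $M'\hookrightarrow M$ (the cocone, an inflation) and push out / apply $\rm(ET4)^{op}$: the composite $M'\to N\to N''$ is a map from a length-$2$ object to a length-$2$ object, so part $(1)$ applies — it factors through some $X''\in\mathcal{X}$ or is an isomorphism. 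In the "factors through $X''$" branch, tracking the factorization back up yields that $f$ factors through an object of $\mathcal{X}$, or (after one more application of part $(2)$/$(2')$ to a leftover length-$2$ piece) yields the asserted deco\-mposition $f=f_3f_2f_1$ with $f_1$ a deflation, $f_2$ an isomorphism, $f_3$ an inflation and $l_{\mathcal{X}}(W_2)<3$. In the "isomorphism" branch, $f$ restricted to the length-$2$ subobject is iso onto a length-$2$ subobject of $N$; combined with the deflations/inflations from the filtrations, a diagram chase with $\rm(ET4)$, $\rm(ET4)^{op}$ and the long exact sequences shows $f$ is either a global isomorphism or again has the stated three-step shape.

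The main obstacle I expect is $(3)$: bookkeeping the case analysis and making sure the intermediate objects $W_1,W_2$ really have the claimed lengths (in particular the strict inequality $l_{\mathcal{X}}(W_2)<l_{\mathcal{X}}(N)$) requires several careful applications of $\rm(ET4)$/$\rm(ET4)^{op}$ together with the length estimates in Lemma \ref{1} and Lemma \ref{3}; the isomorphism-branch in $(1)$ and $(3)$ — where one must upgrade "bijective on associated graded" to "isomorphism" — is the genuinely non-formal point, and I would handle it by a five-lemma argument on the exact sequences $\mathscr{C}(-,A)\to\mathscr{C}(-,B)\to\mathscr{C}(-,C)\to\mathbb{E}(-,A)\to\mathbb{E}(-,B)$ attached to the two filtrations.
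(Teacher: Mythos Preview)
Your overall skeleton—restrict or corestrict $f$ along objects of $\mathcal{X}$, invoke Lemma~\ref{4}/Corollary~\ref{3.5}, then build the later parts on the earlier ones—matches the paper's. But two genuine gaps remain.

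First, in part~(1) your branch ``$qf\neq 0$'' does \emph{not} force $f$ to be an isomorphism. For a concrete obstruction, let $M$ be an indecomposable length-$2$ object with top $X_2''\in\mathcal X$, let $N=X_1'\oplus X_2''$ be split, and let $f$ be the composite $M\twoheadrightarrow X_2''\hookrightarrow N$: then $qf$ is a nonzero deflation but $f$ factors through $X_2''\in\mathcal X$ and is not an isomorphism. What this branch really yields is an induced map $s\colon \cocone(qf)\to X_1'$ between objects of $\mathcal{X}$, hence iso \emph{or} zero; only when $s$ is an isomorphism does the five-lemma on the long exact sequences give $f$ an isomorphism, while $s=0$ forces $f|_{\cocone(qf)}=0$ and hence $f$ factors through $X_2''$. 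Your phrase ``$f$ is bijective on associated graded pieces, hence iso'' is exactly where the argument breaks: two length-$2$ objects with the same composition factors need not be isomorphic (split versus nonsplit), so this is a hypothesis to be verified in each sub-case, not a consequence of $qf\neq 0$.

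Second, and more structurally, in part~(2) you build from the inflations $X_1\hookrightarrow N$ and $t\colon X_2'\hookrightarrow Y$ an object $M'$ sitting in a conflation $X_1\to M'\to X_2'$ together with an inflation $f'\colon M'\hookrightarrow N$. But $M'$ is \emph{not} automatically isomorphic to $M$: both are extensions of $X_2'$ by $X_1$, possibly distinct ones. The paper closes this by invoking the weak-pullback property of the $\rm(ET4)^{op}$ square (Lemma~\ref{2.5}) to obtain a comparison map $l\colon M\to M'$ with $f=f'l$, and then—this is the recursive step you omit—applies part~(1) to $l$ (since $l_{\mathcal{X}}(M)=l_{\mathcal{X}}(M')=2$) to conclude that $l$ is an isomorphism or factors through some $X\in\mathcal{X}$. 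Your ``splicing the two inflations\dots\ after possibly rewriting $M$ up to isomorphism'' assumes the conclusion. The same two ingredients (Lemma~\ref{2.5} plus a recursive appeal to a lower case) are what drive part~(3): the paper applies (1) to the induced map $t$ between the length-$2$ \emph{cones} of $a$ and of $fa$, and then (2$'$) to the resulting comparison map. Your alternative route via the composite $M'\hookrightarrow M\xrightarrow{f} N\twoheadrightarrow N''$ would still require exactly this machinery to convert information about that composite into a factorization of $f$ itself.
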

\begin{proof} Let $l_{\mathcal{X}}(M)=m$ and $l_{\mathcal{X}}(N)=n$. Take an $\mathbb{E}$-triangle of the from $X_{1}\stackrel{a}{\longrightarrow}M\stackrel{b}{\longrightarrow}X_{2}\stackrel{}\dashrightarrow$ with $X_{1}\in \mathcal{X}$ and $l_{\mathcal{X}}(X_{2})=m-1$.

$(1)$~If $fa=0$, then $f$ factors through $X_{2}$. Otherwise, by Lemma \ref{4}, we have the following commutative diagram
\begin{equation}\label{3.10}
\xymatrix{
  X_{1} \ar@{=}[d] \ar[r]^-{a} & M\ar[d]_-{f} \ar[r]^-{b} & X_{2} \ar[d]_-{t} \ar@{-->}[r]^-{t^{\ast}\delta}  &  \\
  X_{1} \ar[r]^-{fa} & N \ar[r]^-{c} & Y \ar@{-->}[r]^-{\delta}&  }
\end{equation}
with $l_{\mathcal{X}}(Y)=n-1$. If $l_{\mathcal{X}}(M)=l_{\mathcal{X}}(N)=2$, then $l_{\mathcal{X}}(Y)=l_{\mathcal{X}}(X_{2})=1$, i.e., $X_2,Y\in\mathcal{X}$. It follows that $t=0$ or $t$ is an isomorphism. If $t=0$, it implies that $f$
factors through $X_{1}$; if $t$ is an isomorphism, it implies that $f$ is an isomorphism.

In what follows, we always assume that $fa\neq0$, since $fa=0$ implies that $f$ factors through $X_2\in\mathcal{X}$. In this case, we still have the commutative diagram (\ref{3.10}).

$(2)$~If $l_{\mathcal{X}}(M)=2$, keep the notation as $(1)$, then by Lemma \ref{4}, we know that $t=0$ or $t$ is an inflation since $X_2\in\mathcal{X}$. If $t=0$, it implies that $f$
factors through $X_{1}$; if $t$ is an inflation, consider the following commutative diagram by $\rm (ET4)^{op}$
\begin{equation}\label{3.11}
\xymatrix{
    M \ar@/_/[ddr]_{f} \ar@/^/[drr]^{b}
    \ar@{.>}[dr]|-{l}  &  &   &  \\
  X_{1} \ar@{=}[d] \ar[r] & M' \ar[d]^-{f'} \ar[r] & X_{2} \ar[d]_-{t} \ar@{-->}[r]^-{t^{\ast}\delta}  &  \\
  X_{1} \ar[r]^-{fa} & N\ar[r]^-{c} & Y \ar[r]^-{\delta} &    }
\end{equation}
with $f'$ being an inflation. Moreover, $l_{\mathcal{X}}(M')=2$ by Lemma \ref{4}. By Lemma \ref{2.5}, there exists a morphism $l$ such that $f=f'l$. It has proved in (1) that $l$ factors through some object in $\mathcal{X}$ or $l$ is an isomorphism. Hence, we complete the proof of $(2)$. The statement $(2')$ can be proved dually.

$(3)$~If $l_{\mathcal{X}}(M)=l_{\mathcal{X}}(N)=3$, keep the notation as $(1)$, then $l_{\mathcal{X}}(Y)=l_{\mathcal{X}}(X_{2})=2$. By $(1)$, it follows that $t$ is an isomorphism or $t=e_{2}e_{1}$ for some $e_{1}:X_{2}\rightarrow X'$ and $e_{2}:X'\rightarrow Y$ with $X'\in\mathcal{X}$. If $t$ is an isomorphism, then $f$ is an isomorphism; for the latter case, repeating the process in (\ref{3.11}) through replacing $t$ by $e_{2}$, we have the following commutative diagram
\begin{equation}\label{chongfu}
\xymatrix{
    M \ar@/_/[ddr]_{f} \ar@/^/[drr]^{e_1b}
    \ar@{.>}[dr]|-{l'}  &  &   &  \\
  X_{1} \ar@{=}[d] \ar[r] & M'' \ar[d]^-{f''} \ar[r] & X' \ar[d]_-{e_2} \ar@{-->}[r]^-{t^{\ast}\delta}  &  \\
  X_{1} \ar[r]^-{fa} & N\ar[r]^-{c} & Y \ar[r]^-{\delta} &    }
\end{equation}
with $f=f''l'$. Note that $l_{\mathcal{X}}(M'')=2$ and $f''$ is an inflation. By $(2')$, we finish the proof.
\end{proof}

\begin{proposition}\label{4.5} Let $\mathcal{X}$ be a semibrick in $\mathscr{C}$, then $\Filt_{\mathscr{C}}(\mathcal{X})$ is a  wide subcategory. In addition, if $\mathcal{X}$ is simple, then $\Filt_{\mathscr{C}}(\mathcal{X})$ is a length wide subcategory.
\end{proposition}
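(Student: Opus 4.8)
The plan is to verify the two defining conditions of a wide subcategory for $\mathcal{T}:=\Filt_{\mathscr{C}}(\mathcal{X})$, namely extension-closedness and admissibility of every morphism, and then to upgrade this to ``length'' when $\mathcal{X}$ is simple. Extension-closedness is immediate: it is exactly Lemma \ref{2}. So the real content is showing that every morphism $f:M\to N$ in $\mathcal{T}$ is admissible, i.e. factors as a deflation followed by an inflation. I would prove this by induction on $l_{\mathcal{X}}(M)+l_{\mathcal{X}}(N)$ (or on $l_{\mathcal{X}}(M)$), the base cases $l_{\mathcal{X}}(M)\le 1$ or $l_{\mathcal{X}}(N)\le 1$ being handled directly by Lemma \ref{4} and Corollary \ref{3.5} (there $f$ is already an inflation, resp. a deflation, hence admissible since identities are both inflations and deflations, and the zero morphism factors through $0$).

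For the inductive step I would take an $\mathbb{E}$-triangle $X_1\xrightarrow{a}M\xrightarrow{b}X_2\dashrightarrow$ with $X_1\in\mathcal{X}$ and $l_{\mathcal{X}}(X_2)=l_{\mathcal{X}}(M)-1$, mimicking the opening of the proof of Lemma \ref{3.8}. If $fa=0$, then $f$ factors through $X_2$ via some $f':X_2\to N$ with $l_{\mathcal{X}}(X_2)<l_{\mathcal{X}}(M)$; by induction $f'$ is admissible, and since $b$ is a deflation, $f=f'b$ is a deflation followed by (deflation $\circ$ inflation), and composing $b$ with the deflation part gives an admissible factorization of $f$ (using that a composite of deflations is a deflation, which follows from $\rm(ET4)$ / the associativity of $\ast$). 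If $fa\neq0$, then by Lemma \ref{4} $fa:X_1\to N$ is an inflation, and (ET3)/(ET4) produces the commutative diagram
$$\xymatrix{
  X_1 \ar@{=}[d] \ar[r]^-{a} & M \ar[d]_-{f} \ar[r]^-{b} & X_2 \ar[d]_-{t} \ar@{-->}[r] & \\
  X_1 \ar[r]^-{fa} & N \ar[r]^-{c} & Y \ar@{-->}[r] & }$$
with $l_{\mathcal{X}}(Y)=l_{\mathcal{X}}(N)-1$. Now $t:X_2\to Y$ is a morphism in $\mathcal{T}$ with $l_{\mathcal{X}}(X_2)+l_{\mathcal{X}}(Y)<l_{\mathcal{X}}(M)+l_{\mathcal{X}}(N)$, so by induction $t=g_2 g_1$ with $g_1$ a deflation and $g_2$ an inflation. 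Pulling back the lower $\mathbb{E}$-triangle along $g_2$ (as in diagram (\ref{3.11}) of Lemma \ref{3.8}, via $\rm(ET4)^{op}$) and using the weak-pullback property (Lemma \ref{2.5}) to lift $f$, I obtain a factorization $f=f'' l$ where $f''$ is an inflation and $l$ is a morphism whose ``$X_2$-component'' is $g_1$; a short diagram chase then exhibits $l$ as (an isomorphism composed with) a deflation, or more directly shows $l$ is admissible by induction, and then $f=f''\circ l$ is admissible since inflation $\circ$ (deflation $\circ$ inflation) can be reassociated — here one needs that the composite of two inflations is an inflation, again from $\rm(ET4)$.

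I expect the main obstacle to be the bookkeeping in this last step: precisely, showing that the composite of an admissible morphism with an inflation (resp. deflation) on the appropriate side is again admissible, which amounts to the stability of inflations and deflations under composition. This is standard in extriangulated categories (it is built into $\rm(ET4)$ and its dual), but one must be careful to invoke it in the right form, and to track that the intermediate objects stay inside $\mathcal{T}$ — which they do by Lemma \ref{1}. For the final sentence of the Proposition, suppose $\mathcal{X}$ is simple, so $\mathcal{X}\subseteq\cim(\Filt_{\mathscr{C}}(\mathcal{X}))=\cim(\mathcal{T})$. Then every object of $\mathcal{T}$, having an $\mathcal{X}$-filtration, has a $\cim(\mathcal{T})$-filtration inside $\mathcal{T}$, so $\mathcal{T}=\Filt_{\mathcal{T}}(\cim(\mathcal{T}))$; combined with the wideness already proven, this says $\mathcal{T}$ is a length wide subcategory. (One should also note $\cim(\mathcal{T})\subseteq\mathcal{X}$ is not needed here, only the reverse inclusion.)
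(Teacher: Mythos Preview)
Your overall strategy matches the paper's --- induction on $l_{\mathcal{X}}(M)+l_{\mathcal{X}}(N)$, peel off $X_1\in\mathcal{X}$ from $M$, split on $fa=0$ versus $fa\neq 0$, and in the latter case apply the inductive hypothesis to the induced map $t:X_2\to Y$, then pull back along the inflation part of $t$ via $\rm(ET4)^{op}$ and Lemma~\ref{2.5} --- but your inductive hypothesis is too weak, and the step you flag as ``a short diagram chase'' or ``admissible by induction'' does not go through.

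The problem is this. By induction you only know $t=g_2g_1$ with $g_1$ a deflation and $g_2$ an inflation; you have no control over the $\mathcal{X}$-length of the intermediate object $W$. Pulling back along $g_2$ produces $M'$ with $l_{\mathcal{X}}(M')\le 1+l_{\mathcal{X}}(W)$, and to apply the inductive hypothesis to $l:M\to M'$ you need $l_{\mathcal{X}}(M')<l_{\mathcal{X}}(N)$, i.e.\ $l_{\mathcal{X}}(W)<l_{\mathcal{X}}(Y)$, which nothing guarantees. Concretely, if $t$ is itself a nonzero non-isomorphism deflation, then the admissible factorisation may be taken with $g_2=\mathrm{id}_Y$; pulling back along $\mathrm{id}_Y$ gives $M'\cong N$, $f''\cong\mathrm{id}$, and $l\cong f$, so you have made no progress. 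Nor does the ``diagram chase'' save you: from $h'l=g_1b$ with $h'$ and $g_1b$ deflations one cannot conclude that $l$ is a deflation, and the weak pullback of Lemma~\ref{2.5} gives no commutativity on the $X_1$-side that would let you run an $\rm(ET4)$-type argument for $l$.

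The paper repairs this by proving a \emph{stronger} statement by induction: either $f$ factors through some object of $\mathcal{X}$, or $f=f_3f_2f_1$ with $f_1$ a deflation, $f_2$ an isomorphism, $f_3$ an inflation, \emph{and} the intermediate object has $\mathcal{X}$-length strictly less than $l_{\mathcal{X}}(N)$. That strict inequality is precisely what makes the induction close up when you pass from $f$ to $l'$, and it is why the paper first establishes the detailed $l_{\mathcal{X}}\le 2$ analysis of Lemma~\ref{3.8}: those cases serve simultaneously as base cases and as the tool that finishes the ``factors through $\mathcal{X}$'' branch (where the pulled-back object has length $2$ and Lemma~\ref{3.8}(2$'$) applies directly). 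Your argument for the ``length'' addendum is fine.
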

\begin{proof}  Let $f:M\rightarrow N$ be an arbitrary morphism in $\Filt_{\mathscr{C}}(\mathcal{X})$. Without loss of generality, we assume that $f$ is nonzero and not an isomorphism. By Lemma \ref{3.8}, $f$ is admissible if $l_{\mathcal{X}}(M)\leq2$ or $l_{\mathcal{X}}(N)\leq2$.

Assume that $l_{\mathcal{X}}(M), l_{\mathcal{X}}(N)>2$. Then we claim that $f$ factors through some object in $\mathcal{X}$ or $f=f_{3}f_{2}f_{1}$ such that $f_{1}:M\rightarrow W_{1}$ is a deflation, $f_{2}:W_{1}\rightarrow W_{2}$ is an isomorphism and $f_{3}:W_{2}\rightarrow N$ is an inflation with $l_{\mathcal{X}}(W_{2})<l_{\mathcal{X}}(N)$.
We proceed the proof of this claim by induction on $l_{\mathcal{X}}(M)+l_{\mathcal{X}}(N)$. The case of $l_{\mathcal{X}}(M)=l_{\mathcal{X}}(N)=3$ follows from  Lemma \ref{3.8}.  By Lemma \ref{3}, we obtain the following diagram
\begin{equation}
\xymatrix{
  X \ar[r]^-{g}\ar[r]\ar[dr]_-{fg} &M \ar[d]_-{f} \ar[r]^-{h} & \cone(g)\ar@{-->}[r]\ar@{.>}^-{e}[dl] &\\
   & N &  &  }
\end{equation}
with $X\in \mathcal{X}$ and  $l_{\mathcal{X}}(\cone(g))=l_{\mathcal{X}}(M)-1$. If $fg=0$, then $f=eh$ for some morphism $e:\cone(g)\rightarrow N$. Since $l_{\mathcal{X}}(\cone(g))+l_{\mathcal{X}}(N)<l_{\mathcal{X}}(M)+l_{\mathcal{X}}(N)$, by induction, the claim holds for $e$; if $fg$ is nonzero, by Lemma \ref{4}, we have the following commutative diagram
\begin{equation}
\xymatrix{
  X \ar@{=}[d] \ar[r]^-{g} & M\ar[d]_-{f} \ar[r]^-{h} & \cone(g)\ar@/^0.8pc/[d]|{X'} \ar[d]_-{t} \ar@{-->}[r]^-{t^{\ast}\delta}  &  \\
  X \ar[r]^-{fg} & N \ar[r]^-{s} & Y \ar@{-->}[r]^-{\delta}&   . }
\end{equation}
Since $l_{\mathcal{X}}(\cone(g))+l_{\mathcal{X}}(Y)=l_{\mathcal{X}}(M)+l_{\mathcal{X}}(N)-2$, by induction, the claim holds for $t$.

Suppose that $t$ factors through some $X'\in\mathcal{X}$, i.e., $t=t_{2}t_{1}$ for $t_{1}:\cone(g)\rightarrow X'$ and $t_{2}:X'\rightarrow Y$. If $t=0$, then $f$ factors through $X\in\mathcal{X}$; if $t$ is nonzero, then $t_{2}$ is an inflation. Applying $\rm (ET4)^{op}$ yields the following  commutative diagram
\begin{equation}\label{3.14}\xymatrix{
    M \ar@/_/[ddr]_{f} \ar@/^/[drr]^{t_{1}h}
    \ar@{.>}[dr]|-{l}  &  &   &  \\
  X \ar@{=}[d] \ar[r] & M' \ar[d]_-{f'} \ar[r]^-{h'} & X' \ar[d]_-{t_{2}} \ar@{-->}[r]^-{t_{2}^{\ast}\delta}  &  \\
  X \ar[r]^-{fg} & N\ar[r]^-{s} & Y \ar@{-->}[r]^-{\delta} &   }
\end{equation}
with $l_{\mathcal{X}}(M')=2$ and $f'$ being an inflation. By Lemma \ref{2.5}, there exists a morphism $l$ such that $f=f'l$. By Lemma \ref{3.8}, $l$ factors as a composition of an isomorphism with a deflation.

Suppose that $t=e_{3}e_{2}e_{1}$ such that $e_{1}:\cone(g)\rightarrow Y_{1}$ is a deflation, $e_{2}:Y_{1}\rightarrow Y_{2}$ is an isomorphism and $e_{3}:Y_{2}\rightarrow Y$ is an inflation with $l_{\mathcal{X}}(Y_{2})<l_{\mathcal{X}}(Y)$. Repeating the process in (\ref{3.14}) through replacing $t_{2}$ by $e_{3}$. Then there exists a morphism $l':M\rightarrow M''$ and $f'':M''\rightarrow N$ such that $f=f''l'$, where $l_{\mathcal{X}}(M'')=l_{\mathcal{X}}(Y_{2})+1$ and $f''$ is an inflation. Note that $l_{\mathcal{X}}(M)+l_{\mathcal{X}}(M'')=l_{\mathcal{X}}(M)+l_{\mathcal{X}}(Y_2)+1<l_{\mathcal{X}}(M)+l_{\mathcal{X}}(Y)+1=l_{\mathcal{X}}(M)+l_{\mathcal{X}}(N)$. By induction, the claim holds for $l'$. Then it follows that the claim holds for $f$. Thus we complete the proof of the claim.
Hence, every morphism in $\Filt_{\mathscr{C}}(\mathcal{X})$ is admissible. By Lemma \ref{1}, we obtain that it is a wide subcategory.

For each simple object $S\in\Filt_{\mathscr{C}}(\mathcal{X})$, its length $l_{\mathcal{X}}(S)$ is equal one, it follows that $\cim(\Filt_{\mathscr{C}}(\mathcal{X}))\subseteq \mathcal{X}$. If $\mathcal{X}$ is simple, then $\cim(\Filt_{\mathscr{C}}(\mathcal{X}))=\mathcal{X}$, i.e., $\Filt_{\mathscr{C}}(\mathcal{X})$ is a length wide subcategory. \end{proof}

Now we are in the position to prove Theorem \ref{main}.

\textbf{{Proof of Theorem \ref{main}.}} Proposition \ref{4.5} implies that $\Filt_{\mathscr{C}}(\mathcal{X})$ is a length wide subcategory and $\cim(\Filt_{\mathscr{C}}(\mathcal{X}))=\mathcal{X}$ if $\mathcal{X}$ is a simple semibrick. Let $\mathcal{D}$ be a length wide subcategory, then $\cim(\mathcal{D})$ is a simple semibrick since every morphism in $\mathcal{D}$ is admissible. Observe that $\mathcal{D}=\Filt_{\mathcal{D}}(\cim(\mathcal{D}))\subseteq\Filt_{\mathscr{C}}(\cim(\mathcal{D})$, by Lemma \ref{2}, we have that $\Filt_{\mathscr{C}}(\cim(\mathcal{D}))\subseteq \mathcal{D}$. This finishes the proof.\fin\\

We finish this section with a straightforward example illustrating Theorem \ref{main}.
\begin{example}\label{4.6} Consider the path algebra $A$ of the quiver $1\longleftarrow2\longleftarrow3$.
The Auslander--Reiten quiver is given by
$$\xymatrix{
   &  & P_3 \ar[dr] & & \\
  & P_2\ar[ur]\ar[dr] \ar@{--}[r]& \ar@{--}[r] & I_2 \ar[dr] &  \\
  S_1 \ar[ur]\ar@{--}[r] &  \ar@{--}[r]& S_2 \ar@{--}[r] \ar[ur]&  \ar@{--}[r] &S_3~.   }$$
Let $\mathcal{D}=\add\{S_2\oplus I_2\oplus S_3\}$, $\mathcal{X}=\{S_2,S_3\}$ and $\mathcal{Y}=\{S_2,S_3,I_2\}$. Then $\mathcal{D}=\Filt_{\mod A}(\mathcal{X})$ is a length wide subcategory and $\cim(\mathcal{D})=\mathcal{X}$ is a simple semibrick. In addition, there is a short exact sequence
$0{\longrightarrow}S_2{\longrightarrow}I_2{\longrightarrow}S_3{\longrightarrow}0$ in $\Filt_{\mod \Lambda}(\mathcal{Y})$ with $l_{\mathcal{Y}}(I_2)=1<2$, which shows that the equation in Lemma \ref{1} does not hold in general.
\end{example}

\section{Semibricks and cotorsion pairs}
Let $\mathscr{C}$ be an extriangulated category.
Let $\mathcal{U}$, $\mathcal{V}\subseteq\mathscr{C}$ be a pair of subcategories which are closed under direct summands. Recall that the pair $(\mathcal{U},\mathcal{V})$ is called a {\em cotorsion pair} in $\mathscr{C}$ if it satisfies the following conditions:

$(a)$~$\mathbb{E}(\mathcal{U},\mathcal{V})=0.$

$(b)$~For any $C\in\mathscr{C}$, there exists a conflation $V\longrightarrow U\longrightarrow C$ such that $V\in\mathcal{V}$, $U\in\mathcal{U}$.

$(c)$~For any $C\in\mathscr{C}$, there exists a conflation $C\longrightarrow V'\longrightarrow U'$ such that $V'\in\mathcal{V}$, $U'\in\mathcal{U}$.

\begin{remark}
Let $(\mathcal{U}$, $\mathcal{V})$ be a cotorsion pair in an extriangulated category $\mathscr{C}$. Then
\begin{itemize}
\item $M\in\mathcal{U}$ if and only if $\mathbb{E}(M,\mathcal{V})=0$;
\item $N\in\mathcal{V}$ if and only if $\mathbb{E}(\mathcal{U},N)=0$;
\item $\mathcal{U}$ and $\mathcal{V}$ are extension-closed;
\item $\mathcal{U}$ is contravariantly finite and $\mathcal{V}$ is covariantly finite in $\mathscr{C}$;
\item $\mathcal{P}\subseteq\mathcal{U}$ and $\mathcal{I}\subseteq\mathcal{V}$.
\end{itemize}
\end{remark}

\begin{lemma}\label{T} Let $\mathscr{C}$ be an extriangulated category and $\mathcal{X}$ be a semibrick in $\mathscr{C}$.

$(1)$ $\Filt_{\mathscr{C}}(\mathcal{X})$ is closed under direct summands in $\mathscr{C}$.

$(2)$ For any object $X\in\Filt_{\mathscr{C}}(\mathcal{X})$, if $X=A\oplus B$, then $l_{\mathcal{X}}(X)=l_{\mathcal{X}}(A)+l_{\mathcal{X}}(B)$.
\end{lemma}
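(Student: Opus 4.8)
The plan is to prove the two statements together by induction on $l_{\mathcal{X}}(X)=n$, since (1) for an object of length $n$ will follow once we know (2) holds for all objects of length $\leq n$ (a direct summand of $X$ is trivially filtered once we produce a filtration, so the real content is that summands land in $\Filt_{\mathscr{C}}(\mathcal{X})$ with the expected length, and this is what the additivity formula delivers). The base cases $n=0,1$ are immediate: if $l_{\mathcal{X}}(X)=1$ then $X\in\mathcal{X}$ is a brick, hence indecomposable, so any direct sum decomposition is trivial and the formula is clear.

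For the inductive step, suppose $l_{\mathcal{X}}(X)=n\geq 2$ and $X=A\oplus B$ with both $A,B$ nonzero (if one summand is zero there is nothing to prove). First I would show $A,B\in\Filt_{\mathscr{C}}(\mathcal{X})$. Fix an $\mathbb{E}$-triangle $X'\stackrel{}{\longrightarrow}X\stackrel{g}{\longrightarrow}Y\stackrel{}\dashrightarrow$ with $Y\in\mathcal{X}$ and $l_{\mathcal{X}}(X')=n-1$, coming from an $\mathcal{X}$-filtration of $X$ (using Remark \ref{2.6}(2) so that the top quotient lies in $\mathcal{X}$). Consider the composites $g|_A\colon A\to Y$ and $g|_B\colon B\to Y$. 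Since $Y$ is a brick and $A$, $B$ are themselves filtered by $\mathcal{X}$ — wait, we do not yet know that; instead I will argue directly: $g$ is a deflation onto $Y\in\mathcal{X}$, and I want to reduce to a summand. The cleaner route: by Corollary \ref{3.5} applied inside $\Filt_{\mathscr{C}}(\mathcal{X})$ — but that requires $A,B\in\Filt_{\mathscr{C}}(\mathcal{X})$ a priori. So instead I would use the established fact (Proposition \ref{4.5}) that $\Filt_{\mathscr{C}}(\mathcal{X})$ is a wide subcategory, in particular every morphism in it is admissible; the idempotent $e\colon X\to X$ with image $A$ is then admissible, $e=\iota\pi$ with $\pi$ a deflation and $\iota$ an inflation, and $A\cong\cocone$ of the relevant deflation / $\im(e)$ sits in $\Filt_{\mathscr{C}}(\mathcal{X})$ because $\Filt_{\mathscr{C}}(\mathcal{X})$, being extension-closed (Lemma \ref{2}) and closed under the cone and cocone of its own (in)flations, contains these pieces. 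This shows $A,B\in\Filt_{\mathscr{C}}(\mathcal{X})$, which is statement (1); and then Lemma \ref{1} gives $l_{\mathcal{X}}(X)\leq l_{\mathcal{X}}(A)+l_{\mathcal{X}}(B)$ using the split $\mathbb{E}$-triangle $A\to A\oplus B\to B$.

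For the reverse inequality $l_{\mathcal{X}}(A)+l_{\mathcal{X}}(B)\leq l_{\mathcal{X}}(X)=n$, I would feed $A,B\in\Filt_{\mathscr{C}}(\mathcal{X})$ into the structure we already have. Take $Y\in\mathcal{X}$ and the triangle $X'\to X\to Y\dashrightarrow$ with $l_{\mathcal{X}}(X')=n-1$ as above. The composite $A\hookrightarrow X\to Y$ is a morphism in $\Filt_{\mathscr{C}}(\mathcal{X})$ to an object of $\mathcal{X}$, so by Corollary \ref{3.5} it is either $0$ or a deflation with $l_{\mathcal{X}}$ of its cocone equal to $l_{\mathcal{X}}(A)-1$; likewise for $B$. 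Since $g=(g|_A, g|_B)$ is a deflation onto $Y\neq 0$, not both restrictions are zero; say $g|_A$ is a deflation. Pulling back the triangle along $\iota_A$ (the weak pullback / $\rm (ET3)$, or simply restricting the filtration) produces a decomposition of $X'$ of the form $A'\oplus B$ where $A'=\cocone(g|_A)$ has $l_{\mathcal{X}}(A')=l_{\mathcal{X}}(A)-1$ and $l_{\mathcal{X}}(X')=n-1$; if $g|_B=0$ then $B$ is a summand of $X'$, while if $g|_B$ is also a deflation one handles it symmetrically, but a direct sum cannot map onto the brick $Y$ by two nonzero maps in a way that both are deflations unless — here I invoke that $Y$ is indecomposable with local endomorphism ring, so exactly one of $g|_A,g|_B$ is (split) epi and the other is zero after composing with an automorphism; absorbing that automorphism, WLOG $g|_B=0$, so $B$ is a direct summand of $X'$. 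By the induction hypothesis applied to $X'$ (length $n-1$): $l_{\mathcal{X}}(X')=l_{\mathcal{X}}(A')+l_{\mathcal{X}}(B)$, i.e. $n-1=(l_{\mathcal{X}}(A)-1)+l_{\mathcal{X}}(B)$, giving $l_{\mathcal{X}}(A)+l_{\mathcal{X}}(B)=n$ as desired. Combined with the $\leq$ from Lemma \ref{1}, this yields equality.

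The main obstacle I anticipate is the bookkeeping around "WLOG $g|_B=0$": one must genuinely use that $Y\in\mathcal{X}$ is a brick (local endomorphism ring) to split the deflation $X\to Y$ compatibly with the decomposition $X=A\oplus B$, reducing to the case where the deflation factors through a single summand; this is where Hom-orthogonality of the semibrick and Krull--Schmidt both get used, and it is the step most prone to a gap if done carelessly. Everything else — closure under summands via admissibility (Proposition \ref{4.5}), the easy inequality via Lemma \ref{1}, and the length arithmetic via Corollary \ref{3.5} and the inductive hypothesis — is routine once that reduction is in place.
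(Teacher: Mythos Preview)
Two steps fail. First, invoking Proposition~\ref{4.5} to place $A,B$ in $\Filt_{\mathscr{C}}(\mathcal{X})$: admissible factorizations in an extriangulated category are not unique (already in the triangulated case every morphism is admissible through any object), so the middle object of an admissible factorization of the idempotent $e$ need not be isomorphic to $A$; and you cannot apply wideness to $p_A$ or $i_A$ directly, since their (co)domain $A$ is not yet known to lie in $\Filt_{\mathscr{C}}(\mathcal{X})$. Second, the reduction ``WLOG $g|_B=0$'': both $g|_A$ and $g|_B$ may well be nonzero deflations, and $Y$ being a brick gives no control over morphisms \emph{into} $Y$---the local-endomorphism-ring argument you sketch concerns direct-sum decompositions of $Y$, not maps to it. Killing one component would require $g|_A$ to factor through the deflation $g|_B$, which is an $\mathbb{E}$-obstruction with no reason to vanish.

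The paper bypasses both problems with a single move. Given the triangle $X_{n-1}\to A\oplus B\xrightarrow{(c,\,d)}X_1$ with $d\neq0$, it applies Corollary~\ref{3.5} not to $d\colon B\to X_1$ (which would need $B\in\Filt_{\mathscr{C}}(\mathcal{X})$) but to $(0,d)\colon A\oplus B\to X_1$, whose source \emph{is} in $\Filt_{\mathscr{C}}(\mathcal{X})$. This produces a new $\mathbb{E}$-triangle $Y\to A\oplus B\xrightarrow{(0,\,d)}X_1$ with $l_{\mathcal{X}}(Y)=n-1$, and now $A$ is manifestly a direct summand of $Y$; induction on $Y$ gives $A\in\Filt_{\mathscr{C}}(\mathcal{X})$ without Proposition~\ref{4.5}. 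The case $d=0$ is handled symmetrically with $B$ a summand of $X_{n-1}$. For the length formula the paper then runs an $(\mathrm{ET4})$ diagram built on a filtration step $Y_1\to B\to B'$ and the split column $B\to A\oplus B\to A$: by $(\mathrm{ET4})(\mathrm{i})$ the induced right column $B'\to H\to A$ again realizes the zero extension, so $H\cong B'\oplus A$ with $l_{\mathcal{X}}(H)=n-1$ via Lemma~\ref{4}, and induction on $H$ closes the argument.
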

\begin{proof} We proceed the proofs of $(1)$ and $(2)$ by induction on the length $l_{\mathcal{X}}(X)=n$ of an object $X\in\Filt_{\mathscr{C}}(\mathcal{X})$. The case $n=0$ is trivial. If $n=1$, the assertions are also clear since each brick is indecomposable. For $n>1$,  without loss of generality, we assume that $X$ is decomposable, and let $X=A\oplus B$ with $A,B\neq 0$. Consider the following diagram
\begin{equation}
\xymatrix{ & B \ar^-{0\choose1}[d]&  & \\
  X_{n-1}  \ar[r]^-{a\choose b} & A\oplus B  \ar[r]^-{(c~d)} & X_{1}  \ar@{-->}[r]&
   }
\end{equation}
with $l_{\mathcal{X}}(X_{n-1})=n-1$ and $X_{1}\in \mathcal{X}$.

If $d=0$, then $B$ is a direct summand of $X_{n-1}$. By induction, $B\in\Filt_{\mathscr{C}}(\mathcal{X})$ and $l_{\mathcal{X}}(B)\leq n-1$. Applying $\rm(ET4)$ together with Lemma \ref{3}, we have the following exact commutative diagram
$$\xymatrix{
  Y_{1} \ar@{=}[d] \ar[r]^-{z} & B \ar[d]^-{0\choose1} \ar[r] & B' \ar[d] \\
  Y_{1}\ar[r]^-{f} & A\oplus B \ar^-{(1~0)}[d] \ar[r] &H \ar[d] \\
   & A\ar@{=}[r] \ar@{-->}[d]^{0} & A \ar@{-->}[d]^{0} \\
  && }
$$
with $Y_{1}\in \mathcal{X}$ and $l_{\mathcal{X}}(B')=l_{\mathcal{X}}(B)-1$. If $f=0$, then $z=0$ and thus $B$ is a direct summand of $B'$. By induction, $l_{\mathcal{X}}(B)\leq l_{\mathcal{X}}(B')=l_{\mathcal{X}}(B)-1$. This is a contradiction. Hence, $f\neq0$. By Lemma \ref{4}, we have that $l_{\mathcal{X}}(H)=l_{\mathcal{X}}(A\oplus B)-l_{\mathcal{X}}(Y_{1})=n-1$.
By induction, $A\in\Filt_{\mathscr{C}}(\mathcal{X})$ and $l_{\mathcal{X}}(H)=l_{\mathcal{X}}(A)+l_{\mathcal{X}}(B')$. Thus,
$$l_{\mathcal{X}}(A\oplus B)=l_{\mathcal{X}}(Y_{1})+l_{\mathcal{X}}(H)=l_{\mathcal{X}}(Y_{1})+l_{\mathcal{X}}(B')+l_{\mathcal{X}}(A)=l_{\mathcal{X}}(B)+l_{\mathcal{X}}(A).$$

If $d\neq0$, by Corollary \ref{3.5}, we have an $\mathbb{E}$-triangle
$$Y\stackrel{h}{\longrightarrow}A\oplus B\stackrel{(0~d)}{\longrightarrow}X_{1}\stackrel{\delta}\dashrightarrow$$
with $l_{\mathcal{X}}(Y)=n-1$. Then the inclusion $A\hookrightarrow A\oplus B$ factors through $h$, and thus $A$ is a direct summand of $Y$. By induction, $A\in\Filt_{\mathscr{C}}(\mathcal{X})$ and $l_{\mathcal{X}}(A)\leq n-1$. Then we can complete the proof by repeating the process in the case of $d=0$.\end{proof}

%Note that $\Filt_{\mathscr{C}}(\mathcal{X})$ is an Artin extriangulated $R$-category since $\Filt_{\mathscr{C}}(\mathcal{X})$ is closed under direct summands and extensions. So one may ask whether the subcategory is closed under cones and cocones. However, there are examples where this is not always true. But, we have the following corollary
Recall that a triangulated subcategory $\mathcal{D}$ of a triangulated category $\mathscr{C}$ is called a {\em thick subcategory} if $\mathcal{D}$ is closed under direct summands in $\mathscr{C}$.
\begin{corollary} Let $\mathscr{C}$ be a triangulated category with the suspension functor $[1]$. Let $\mathcal{X}$ be a semibrick such that $\mathcal{X}[1],\mathcal{X}[-1]\subseteq\Filt_{\mathscr{C}}(\mathcal{X})$. Then $\Filt_{\mathscr{C}}(\mathcal{X})$ is a thick subcategory of $\mathscr{C}$.
\end{corollary}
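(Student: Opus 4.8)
The plan is to verify the three defining properties of a thick subcategory for $\Filt_{\mathscr{C}}(\mathcal{X})$: that it is a triangulated subcategory (closed under shifts in both directions and under cones) and that it is closed under direct summands. The last property is immediate from Lemma \ref{T}(1), so the work is concentrated on the first two. Throughout I would write $\mathcal{T}=\Filt_{\mathscr{C}}(\mathcal{X})$.

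First I would establish that $\mathcal{T}$ is closed under the shift functors $[1]$ and $[-1]$. Since $[1]$ is an exact autoequivalence, it sends a conflation $A\to B\to C$ to a conflation $A[1]\to B[1]\to C[1]$, hence sends an $\mathcal{X}$-filtration of $M$ to an $\mathcal{X}[1]$-filtration of $M[1]$; thus $M[1]\in\Filt_{\mathscr{C}}(\mathcal{X}[1])$. By hypothesis $\mathcal{X}[1]\subseteq\mathcal{T}$, and since $\mathcal{T}$ is extension-closed (Lemma \ref{2}) we get $\Filt_{\mathscr{C}}(\mathcal{X}[1])\subseteq\Filt_{\mathscr{C}}(\mathcal{T})=\mathcal{T}$ by the minimality in Lemma \ref{2}. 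Hence $M[1]\in\mathcal{T}$. The same argument with $[-1]$ and the hypothesis $\mathcal{X}[-1]\subseteq\mathcal{T}$ shows $\mathcal{T}$ is closed under $[-1]$.

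Next I would show $\mathcal{T}$ is closed under taking cones. Let $f:M\to N$ be a morphism with $M,N\in\mathcal{T}$, and complete it to a triangle $M\stackrel{f}{\to}N\to \cone(f)\to M[1]$, which in the extriangulated language is an $\mathbb{E}$-triangle $M\to N\to \cone(f)\dashrightarrow$ — equivalently, rotating, an $\mathbb{E}$-triangle $N\to\cone(f)\to M[1]\dashrightarrow$. Since $N\in\mathcal{T}$ and $M[1]\in\mathcal{T}$ by the previous paragraph, Lemma \ref{1} gives $\cone(f)\in\mathcal{T}$ (and indeed $l_{\mathcal{X}}(\cone(f))\le l_{\mathcal{X}}(N)+l_{\mathcal{X}}(M[1])$). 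Together with closure under shifts this makes $\mathcal{T}$ a triangulated subcategory of $\mathscr{C}$; adding closure under direct summands from Lemma \ref{T}(1) finishes the proof.

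I expect the only genuinely delicate point to be the passage from ``$M[1]\in\Filt_{\mathscr{C}}(\mathcal{X}[1])$'' to ``$M[1]\in\mathcal{T}$'', i.e.\ making sure that $\Filt_{\mathscr{C}}(\mathcal{X}[1])\subseteq\mathcal{T}$ really does follow from $\mathcal{X}[1]\subseteq\mathcal{T}$ plus extension-closedness of $\mathcal{T}$; this is exactly the minimality clause of Lemma \ref{2}, so it is not an obstacle so much as a bookkeeping step. Everything else reduces to the fact that in a triangulated category the shift functors are exact in the extriangulated sense and that $\mathbb{E}$-triangles may be rotated, both of which are standard.
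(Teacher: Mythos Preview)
Your proposal is correct and follows essentially the same route as the paper: both reduce to Lemma~\ref{T}(1) for direct summands, Lemma~\ref{2} for extension-closure, and then verify closure under $[1]$ and $[-1]$. The only cosmetic difference is that the paper proves shift-closure by an explicit induction on $l_{\mathcal{X}}(M)$ (using a triangle $X\to M\to N\to X[1]$ with $X\in\mathcal{X}$), whereas you package the same induction into the minimality clause of Lemma~\ref{2} via $\Filt_{\mathscr{C}}(\mathcal{X}[1])\subseteq\mathcal{T}$; and the paper leaves cone-closure implicit (extension-closed plus shift-closed suffices) while you spell it out.
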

\begin{proof} By Lemma \ref{2} and Lemma \ref{T}, we only need to prove $\Filt_{\mathscr{C}}(\mathcal{X})$ is closed under $[1]$ and $[-1]$. We will proceed the proof by induction on the lengths of the objects in $\Filt_{\mathscr{C}}(\mathcal{X})$. Let $M\in\Filt_{\mathscr{C}}(\mathcal{X})$ with $l_{\mathcal{X}}(M)=n$. If $n=1$, $M[1]\in\Filt_{\mathscr{C}}(\mathcal{X})$, since $\mathcal{X}[1]\subseteq\Filt_{\mathscr{C}}(\mathcal{X})$; if $n>1$, by Lemma \ref{3}, there exists a triangle $X\stackrel{}{\longrightarrow}M\stackrel{}{\longrightarrow}N\stackrel{}\rightarrow X[1]$ with $X\in \mathcal{X}$ and $l_{\mathcal{X}}(N)=n-1$. Note that $X[1]\in\Filt_{\mathscr{C}}(\mathcal{X})$, and by induction, $N[1]\in \Filt_{\mathscr{C}}(\mathcal{X})$. Then it follows that $M[1]\in\Filt_{\mathscr{C}}(\mathcal{X})$, since $\Filt_{\mathscr{C}}(\mathcal{X})$ is closed under extensions. Similarly, using $\mathcal{X}[-1]\subseteq\Filt_{\mathscr{C}}(\mathcal{X})$, we can prove $\Filt_{\mathscr{C}}(\mathcal{X})$ is closed under $[-1]$. Hence, $\Filt_{\mathscr{C}}(\mathcal{X})$ is a thick subcategory of $\mathscr{C}$.
\end{proof}

Let $\mathscr{C}$ be an extriangulated category and $\mathcal{X}$ be a semibrick in $\mathscr{C}$, then $\mathcal{T}=\Filt_{\mathscr{C}}(\mathcal{X})$ is an extriangualted category. Given a subcategory $\mathcal{D}$ of $\mathcal{T}$, we denote by $\mathcal{S}_{\mathcal{D}}$ the subset of $\mathcal{X}$ such that $\Filt_{\mathcal{T}}(\mathcal{S_{\mathcal{D}}})$ is the smallest filtration subcategory containing $\mathcal{D}$ in $\mathcal{T}$.

\begin{lemma}\label{B} Let $\mathcal{X}$ be a semibrick in an extriangulated category $\mathscr{C}$ and $\mathcal{T}=\Filt_{\mathscr{C}}(\mathcal{X})$.

$(1)$ For any subsets $S',S\subseteq \mathcal{X}$, $\Filt_{\mathscr{C}}(S')\subseteq\Filt_{\mathscr{C}}(S)$ if and only if $S'\subseteq S$. In particular, $\Filt_{\mathscr{C}}(S')=\Filt_{\mathscr{C}}(S)$ if and only if $S'= S$.

$(2)$ $\Filt_{\mathcal{T}}(\mathcal{S}_{\mathcal{D}})=\mathcal{D}$  if and only if $\mathcal{D}$ is a filtration subcategory of $\mathcal{T}$.
\end{lemma}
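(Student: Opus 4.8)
The plan is to prove the two parts separately, with part $(1)$ doing the real work and part $(2)$ being essentially a formal consequence. For part $(1)$, the nontrivial direction is: if $\Filt_{\mathscr{C}}(S')\subseteq\Filt_{\mathscr{C}}(S)$ then $S'\subseteq S$; the converse is immediate from the monotonicity of $\Filt_{\mathscr{C}}(-)$, and the ``in particular'' clause follows by applying the biconditional twice. So suppose $X'\in S'$. Then $X'\in\Filt_{\mathscr{C}}(S')\subseteq\Filt_{\mathscr{C}}(S)$, and since $X'$ is a brick, hence a nonzero object, Lemma \ref{4} (or rather the observation preceding it that simple objects in $\Filt_{\mathscr{C}}(\mathcal{X})$ have $\mathcal{X}$-length $1$) forces $l_{S}(X')=1$. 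Concretely, $X'\in F_1(S)=S\cup\{0\}$ would give $X'\in S$ directly; the point is therefore to show $l_S(X')=1$, i.e.\ that $X'$ cannot admit an $S$-filtration of length $\geq 2$ without already lying in $S$. Here I would invoke that $X'$ is a brick and $S\subseteq\mathcal{X}$ is a semibrick, so by Lemma \ref{4} applied within $\Filt_{\mathscr{C}}(S)$, any nonzero map from an object of $S$ into $X'$ is a deflation (via Corollary \ref{3.5}) with cocone of length $l_S(X')-1$; if $l_S(X')\geq 2$ one extracts an $\mathbb{E}$-triangle $A\to X'\to B\dashrightarrow$ with $A,B\neq 0$, contradicting that $X'$ is simple in $\Filt_{\mathscr{C}}(\mathcal{X})$ (recall every brick in a semibrick is a simple object of the filtration subcategory, as noted in the proof of Proposition \ref{4.5} where $\cim(\Filt_{\mathscr{C}}(\mathcal{X}))=\mathcal{X}$). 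Hence $l_S(X')=1$, so $X'\in S$, giving $S'\subseteq S$.

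For part $(2)$, the forward direction is trivial: if $\Filt_{\mathcal{T}}(\mathcal{S}_{\mathcal{D}})=\mathcal{D}$ then $\mathcal{D}$ is visibly a filtration subcategory of $\mathcal{T}$ (it equals $\Filt_{\mathcal{T}}$ of some subset of $\mathcal{X}$). For the converse, suppose $\mathcal{D}=\Filt_{\mathcal{T}}(\mathcal{Y})$ for some $\mathcal{Y}\subseteq\mathcal{X}$. By the very definition of $\mathcal{S}_{\mathcal{D}}$ as the subset of $\mathcal{X}$ for which $\Filt_{\mathcal{T}}(\mathcal{S}_{\mathcal{D}})$ is the \emph{smallest} filtration subcategory containing $\mathcal{D}$, we have $\Filt_{\mathcal{T}}(\mathcal{S}_{\mathcal{D}})\subseteq\Filt_{\mathcal{T}}(\mathcal{Y})=\mathcal{D}$, while $\mathcal{D}\subseteq\Filt_{\mathcal{T}}(\mathcal{S}_{\mathcal{D}})$ by definition of ``containing $\mathcal{D}$''. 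Hence $\Filt_{\mathcal{T}}(\mathcal{S}_{\mathcal{D}})=\mathcal{D}$. (One should note here that the well-definedness of $\mathcal{S}_{\mathcal{D}}$ — that a smallest such subset exists — is where part $(1)$ is genuinely used: among all $S\subseteq\mathcal{X}$ with $\mathcal{D}\subseteq\Filt_{\mathcal{T}}(S)$, part $(1)$ shows the inclusion order on the $\Filt_{\mathcal{T}}(S)$ matches the inclusion order on the $S$, so one can take $\mathcal{S}_{\mathcal{D}}=\bigcap\{S\subseteq\mathcal{X}\mid\mathcal{D}\subseteq\Filt_{\mathcal{T}}(S)\}$ and check this intersection still contains $\mathcal{D}$ in its filtration subcategory; when $\mathcal{D}$ is itself a filtration subcategory this intersection is just $\mathcal{Y}$.)

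The main obstacle I anticipate is the key step in part $(1)$: pinning down that an element $X'$ of $S'$, once it is known to lie in $\Filt_{\mathscr{C}}(S)$, must actually be a member of $S$ rather than merely a proper $S$-filtered object. This rests on the brick/semibrick hypothesis and the ``simple object'' characterization, and the cleanest route is to quote that in $\Filt_{\mathscr{C}}(\mathcal{X})$ (hence in $\Filt_{\mathscr{C}}(S)$) the simple objects are exactly the members of $\mathcal{X}$ (resp.\ $S$) — which is precisely the content established in the proof of Proposition \ref{4.5} — combined with the fact that a brick, having no nonsplit self-extension-free decomposition obstruction, is simple in the extriangulated sense. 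Everything else is bookkeeping with $\ast$ and the inductive description of $\Filt$ from Remark \ref{2.6}.
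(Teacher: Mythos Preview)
Your argument for part $(2)$ matches the paper's and is fine. The problem is in part $(1)$.

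You reduce to showing $l_S(X')=1$ by asserting that $X'\in\mathcal{X}$ is a simple object of $\Filt_{\mathscr{C}}(\mathcal{X})$, and you support this by citing Proposition~\ref{4.5}, writing ``$\cim(\Filt_{\mathscr{C}}(\mathcal{X}))=\mathcal{X}$.'' But Proposition~\ref{4.5} only proves $\cim(\Filt_{\mathscr{C}}(\mathcal{X}))\subseteq\mathcal{X}$; the reverse inclusion $\mathcal{X}\subseteq\cim(\Filt_{\mathscr{C}}(\mathcal{X}))$ is \emph{exactly} the definition of a simple semibrick, an extra hypothesis that Lemma~\ref{B} does not assume. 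So the sentence ``every brick in a semibrick is a simple object of the filtration subcategory'' is not justified, and your contradiction (``$A\to X'\to B$ with $A,B\neq 0$ contradicts simplicity of $X'$'') does not go through. The surrounding invocation of Lemma~\ref{4}/Corollary~\ref{3.5} is also garbled: neither result says that a nonzero map from an object of $S$ into $X'$ is a deflation with cocone of $S$-length $l_S(X')-1$.

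The paper's proof bypasses simplicity altogether. From a minimal $S$-filtration one gets an $\mathbb{E}$-triangle $X_1\stackrel{x}{\to} X'\to X_2\dashrightarrow$ with $X_1\in S$ and $l_S(X_2)=n-1$. Since $X_1,X'\in\mathcal{X}$ and $\mathcal{X}$ is a semibrick, $x$ is either an isomorphism (whence $X'\cong X_1\in S$) or zero. In the zero case the long exact sequence forces $X'$ to be a direct summand of $X_2$, and then Lemma~\ref{T} (closure under summands and additivity of length on direct sums) gives $n=l_S(X')\le l_S(X_2)=n-1$, a contradiction. The ingredient you are missing is precisely this use of Lemma~\ref{T} to dispose of the ``$x=0$'' case; without it, merely knowing $l_S(X')\ge 2$ produces a nontrivial conflation through $X'$ but gives no contradiction.
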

\begin{proof} (1) We only need to prove the necessity of the first statement. For any $X\in S'$, then $X\in\Filt_{\mathscr{C}}(S')\subseteq\Filt_{\mathscr{C}}(S)$ with $l_{\mathcal{S}}(X)=n$. By Lemma \ref{3}, there exists an $\mathbb{E}$-triangle $X_{1}\stackrel{x}{\longrightarrow}X\stackrel{}{\longrightarrow}X_{2}\stackrel{}\dashrightarrow$ with  $X_{1}\in \mathcal{S}$ and $l_{\mathcal{S}}(X_{2})=n-1$. Note that $x$ is an isomorphism or zero. For the former, we obtain that $X\in S$; for the latter, we get that $X$ is a direct summand of $X_{2}$, by Lemma \ref{T}, it follows that $n=l_{\mathcal{S}}(X)\leq l_{\mathcal{S}}(X_{2})=n-1$, this is a contradiction. Hence, $S'\subseteq S$.

(2) We only need to prove the sufficiency. Suppose that $\mathcal{D}$ is a filtration subcategory, i.e., there exists a subset $\mathcal{Y}$ of $\mathcal{X}$ such that $\mathcal{D}=\Filt_{\mathcal{T}}(\mathcal{Y})$. By definition, $\Filt_{\mathcal{T}}(\mathcal{Y})=\mathcal{D}\subseteq\Filt_{\mathcal{T}}(\mathcal{S}_{\mathcal{D}})$. Moreover, by the minimality of $\Filt_{\mathcal{T}}(\mathcal{S}_{\mathcal{D}})$, we also have that $\Filt_{\mathcal{T}}(\mathcal{S}_{\mathcal{D}})\subseteq\Filt_{\mathcal{T}}(\mathcal{Y})$. Thus, $\Filt_{\mathcal{T}}(\mathcal{Y})=\Filt_{\mathcal{T}}(\mathcal{S}_{\mathcal{D}})$, that is, $\Filt_{\mathcal{T}}(\mathcal{S}_{\mathcal{D}})=\mathcal{D}$.
\end{proof}

\begin{proposition}\label{finite}
Let $\mathcal{X}$ be a semibrick in an extriangulated category $\mathscr{C}$ and $\mathcal{T}=\Filt_{\mathscr{C}}(\mathcal{X})$. Then for any subset $\mathcal{S}\subseteq\mathcal{X}$, we have the following

$(1)$ For any $M\in \mathcal{T}$, there exists an $\mathbb{E}$-triangle $N\stackrel{x}{\longrightarrow}M\stackrel{}{\longrightarrow}P\stackrel{}\dashrightarrow$ with $N\in\Filt_{\mathcal{T}}(\mathcal{S})$ and $P\in\mathcal{S}^{\perp}$.

$(2)$ For any $M\in \mathcal{T}$, there exists an $\mathbb{E}$-triangle $U\stackrel{}{\longrightarrow}M\stackrel{y}{\longrightarrow}V\stackrel{}\dashrightarrow$ with $U\in^{\perp}\mathcal{S}$ and $V\in\Filt_{\mathcal{T}}(\mathcal{S})$.

$(3)$ $\Filt_{\mathcal{T}}(\mathcal{S})$ is functorially finite in $\mathcal{T}$.

$(4)$ Assume that $\mathcal{T}$ has enough projectives and enough
injectives. If $\mathcal{S}_{\mathcal{P}}\subseteq\mathcal{S}$, then $(\Filt_{\mathcal{T}}(\mathcal{S}),\mathcal{S}^{\perp_{1}})$ is a cotorsion pair in $\mathcal{T}$. Dually, if $\mathcal{S}_{\mathcal{I}}\subseteq\mathcal{S}$, then $(^{\perp_{1}}\mathcal{S},\Filt_{\mathcal{T}}(\mathcal{S}))$ is a cotorsion pair in $\mathcal{T}$.
\end{proposition}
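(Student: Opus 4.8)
\textbf{Proof proposal for Proposition \ref{finite}.}
The plan is to prove (1) and (2) together by induction on $l_{\mathcal{X}}(M)$, then derive (3) and (4) as formal consequences using Remark \ref{2.6}(3). For (1), if $M\in\Filt_{\mathcal{T}}(\mathcal{S})$ take $N=M$, $P=0$; if $M\in\mathcal{S}^\perp$ take $N=0$, $P=M$. For the inductive step with $l_{\mathcal{X}}(M)=n$, use Lemma \ref{3} to pick an $\mathbb{E}$-triangle $X_1\xrightarrow{a}M\xrightarrow{b}M'\dashrightarrow$ with $X_1\in\mathcal{X}$ and $l_{\mathcal{X}}(M')=n-1$. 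By induction there is $N'\xrightarrow{x'}M'\to P'\dashrightarrow$ with $N'\in\Filt_{\mathcal{T}}(\mathcal{S})$, $P'\in\mathcal{S}^\perp$. Now distinguish two cases according to whether $X_1\in\mathcal{S}$ or $X_1\notin\mathcal{S}$ (so that, by $\mathcal{X}$ being a semibrick, $\mathscr{C}(X_1,\mathcal{S})=0$ and $X_1$ contributes to the $\mathcal{S}^\perp$-part). In the first case, pull back the triangle $X_1\to M\to M'$ along $x':N'\to M'$ using $\rm(ET4)^{op}$ to get $N\in\mathcal{X}\ast\Filt_{\mathcal{T}}(\mathcal{S})=\Filt_{\mathcal{T}}(\mathcal{S})$ with $N\to M$ and cone $P'\in\mathcal{S}^\perp$. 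In the second case, one instead wants to absorb $X_1$ into the $\mathcal{S}^\perp$ part: combine the triangle $X_1\to M\to M'$ with $M'\to P'$ via $\rm(ET4)$ to obtain $N'\to M\to P\dashrightarrow$ where $P\in X_1\ast P'$; here I must check $P\in\mathcal{S}^\perp$, which follows from the long exact sequence of Proposition 2.4 applied to $X_1\to P\to P'$ together with $\mathscr{C}(X_1,\mathcal{S})=0=\mathscr{C}(P',\mathcal{S})$ (note $\mathcal{S}^\perp=\Filt(\mathcal{S})^\perp$ need not be used, just $\mathscr{C}(-,X)=0$ for $X\in\mathcal{S}$, and $\mathcal{S}^\perp$ is extension-closed on the contravariant side). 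Statement (2) is entirely dual, using $\rm(ET4)$/$\rm(ET4)^{op}$ in the mirrored roles.

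For (3), contravariant finiteness of $\Filt_{\mathcal{T}}(\mathcal{S})$ follows from (1): given $M$, the map $x:N\to M$ in the triangle $N\xrightarrow{x}M\to P\dashrightarrow$ with $P\in\mathcal{S}^\perp$ is a right $\Filt_{\mathcal{T}}(\mathcal{S})$-approximation, because for any $D\in\Filt_{\mathcal{T}}(\mathcal{S})$ we have $\mathscr{C}(D,P)\subseteq\mathscr{C}(\Filt_{\mathcal{T}}(\mathcal{S}),\mathcal{S}^\perp)=0$ by Remark \ref{2.6}(3) (since $\mathcal{S}^\perp=\Filt_{\mathcal{T}}(\mathcal{S})^\perp$), so by the exactness in Proposition 2.4 the map $\mathscr{C}(D,N)\to\mathscr{C}(D,M)$ is surjective. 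Covariant finiteness is dual, via (2) and $^\perp\mathcal{S}={}^\perp\Filt_{\mathcal{T}}(\mathcal{S})$.

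For (4), assume $\mathcal{T}$ has enough projectives and injectives and $\mathcal{S}_{\mathcal{P}}\subseteq\mathcal{S}$. First note $\mathbb{E}(\Filt_{\mathcal{T}}(\mathcal{S}),\mathcal{S}^{\perp_1})=0$ by Remark \ref{2.6}(3), since $\mathcal{S}^{\perp_1}=\Filt_{\mathcal{T}}(\mathcal{S})^{\perp_1}$. For axiom (c), given $M\in\mathcal{T}$, apply (2) to get $U\to M\xrightarrow{y}V\dashrightarrow$ with $U\in{}^\perp\mathcal{S}$ and $V\in\Filt_{\mathcal{T}}(\mathcal{S})$; I need to upgrade this to the form $M\to V'\to U'$ with $V'\in\mathcal{S}^{\perp_1}$, $U'\in\Filt_{\mathcal{T}}(\mathcal{S})$ — here the role of ``enough injectives'' and the hypothesis $\mathcal{S}_{\mathcal{P}}\subseteq\mathcal{S}$ (equivalently $\mathcal{P}\subseteq\Filt_{\mathcal{T}}(\mathcal{S})$) enters: take an $\mathbb{E}$-triangle $M\to I\to M'\dashrightarrow$ with... actually the cleaner route is to apply (1) to $M$ directly, getting $N\to M\to P\dashrightarrow$ with $N\in\Filt_{\mathcal{T}}(\mathcal{S})$ and $P\in\mathcal{S}^\perp\subseteq\mathcal{S}^{\perp_1}$, then rotate; for axiom (b) dually apply (2) and use $\mathcal{P}\subseteq\Filt_{\mathcal{T}}(\mathcal{S})$ to replace the $^\perp\mathcal{S}$-term by a $\Filt_{\mathcal{T}}(\mathcal{S})$-term after taking a projective cover. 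The dual statement under $\mathcal{S}_{\mathcal{I}}\subseteq\mathcal{S}$ is symmetric. The main obstacle I anticipate is exactly this last bookkeeping in (4): the triangles produced by (1) and (2) land in $\mathcal{S}^\perp$ and $^\perp\mathcal{S}$, but the cotorsion pair requires the $\mathbb{E}$-orthogonal classes $\mathcal{S}^{\perp_1}$ and $^{\perp_1}\mathcal{S}$, so one must carefully use the enough-projectives/injectives hypothesis together with $\mathcal{P}\subseteq\Filt_{\mathcal{T}}(\mathcal{S})$ (resp. $\mathcal{I}\subseteq\Filt_{\mathcal{T}}(\mathcal{S})$) to close the gap between $\mathrm{Hom}$-vanishing and $\mathbb{E}$-vanishing, and to check that the constructed conflations have \emph{both} terms in the correct classes, which requires re-running the octahedral-type arguments of (1)–(2) a second time inside $\mathcal{T}$.
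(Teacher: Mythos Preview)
Your inductive scheme for (1) has a genuine gap in the second case. When $X_1\notin\mathcal{S}$, you claim that combining $X_1\to M\to M'$ with $N'\to M'\to P'$ via $\rm(ET4)$ yields a conflation $N'\to M\to P$ with $P\in X_1\ast P'$. But the only octahedral move available here is to compose the deflations $M\to M'\to P'$ via $\rm(ET4)^{op}$, and that produces $K\to M\to P'$ with $K\in X_1\ast N'$ --- the extra factor lands on the \emph{cocone} side, not the cone side, which is useless since $X_1\notin\mathcal{S}$. Your desired conflation would require the inflation $x':N'\hookrightarrow M'$ to lift along the deflation $b:M\twoheadrightarrow M'$; the obstruction is the pulled-back class $(x')^*\delta\in\mathbb{E}(N',X_1)$, and nothing in the hypotheses forces this to vanish ($N'\in\Filt_{\mathcal{T}}(\mathcal{S})$ and $X_1\in\mathcal{X}\setminus\mathcal{S}$ impose no $\mathbb{E}$-orthogonality). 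As a minor aside, the vanishing needed to verify $P\in\mathcal{S}^\perp$ is $\mathscr{C}(\mathcal{S},X_1)=0=\mathscr{C}(\mathcal{S},P')$, not $\mathscr{C}(X_1,\mathcal{S})$ and $\mathscr{C}(P',\mathcal{S})$.

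The paper sidesteps this by never peeling off an arbitrary $X_1\in\mathcal{X}$. Instead: if $M\notin\mathcal{S}^\perp$, choose a nonzero $f:S\to M$ with $S\in\mathcal{S}$; Lemma~\ref{4} guarantees that $f$ is an inflation with $l_{\mathcal{X}}(\cone(f))=l_{\mathcal{X}}(M)-1$, so one is always in your ``case~1'' and a single $\rm(ET4)^{op}$ diagram completes the induction. Part (3) then matches your argument. For (4) the paper does not attempt to build the cotorsion-pair conflations by hand: it records that $\Filt_{\mathcal{T}}(\mathcal{S})$ is functorially finite (part (3)), closed under direct summands (Lemma~\ref{T}), and $\mathbb{E}$-left-orthogonal to $\mathcal{S}^{\perp_1}$ (Remark~\ref{2.6}), and then cites \cite[Proposition~3.4]{Ch}. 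Your attempted direct verification hits precisely the wall you anticipate: parts (1)--(2) only produce $\mathrm{Hom}$-orthogonal complements, there is no inclusion $\mathcal{S}^\perp\subseteq\mathcal{S}^{\perp_1}$ in general, and conflations cannot be ``rotated'' in an extriangulated category.
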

\begin{proof} $(1)$ If $M\in\mathcal{S}^{\perp}$, then we use the $\mathbb{E}$-triangle $0\stackrel{}{\longrightarrow}M\stackrel{}{\longrightarrow}M\stackrel{0}\dashrightarrow$. If $M\notin\mathcal{S}^{\perp}$, we proceed the proof by induction on $l_{\mathcal{X}}(M)=n$. If $n=1$, i.e., $M\in\mathcal{X}$. Since $M\notin\mathcal{S}^{\perp}$, there exists an object $S\in\mathcal{S}$ such that $\Hom(S,M)\neq0$, and then $M\cong S$, that is, $M\in\mathcal{S}$. Thus, we take the $\mathbb{E}$-triangle $M\stackrel{}{\longrightarrow}M\stackrel{}{\longrightarrow}0\stackrel{0}\dashrightarrow$, which is the desired. For $n>1$, since $M\notin\mathcal{S}^{\perp}$, there exists a nonzero morphism $f:S\rightarrow M$ for some object $S\in\mathcal{S}$, by Lemma \ref{4}, we have an $\mathbb{E}$-triangle $S\stackrel{f}{\longrightarrow}M\stackrel{}{\longrightarrow}H\stackrel{}\dashrightarrow$ with $l_{\mathcal{X}}(H)=n-1$. By induction, for $H$, we have an $\mathbb{E}$-triangle $H'\stackrel{}{\longrightarrow}H\stackrel{}{\longrightarrow}P\stackrel{}\dashrightarrow$ with $H'\in\Filt_{\mathcal{T}}(\mathcal{S})$ and $P\in\mathcal{S}^{\perp}$. Thus we have the following exact commutative diagram by $\rm (ET4)^{op}$
\begin{equation}\label{4.2}
\xymatrix{
   S\ar@{=}[d]\ar[r] & N \ar[d] \ar[r] & H' \ar[d] \\
  S \ar[r] & M\ar[d] \ar[r] & H \ar[d] \\
   & P \ar@{=}[r] & P~.    }
\end{equation}
Note that $N\in\Filt_{\mathcal{T}}(\mathcal{S})$ since $S$, $H'\in\Filt_{\mathcal{T}}(\mathcal{S})$. Then the second column in (\ref{4.2}) gives the desired $\mathbb{E}$-triangle.

$(2)$ It is similar to (1).

$(3)$ It is easy to see that $x:N\rightarrow M$ in (1) is a right $\Filt_{\mathcal{T}}(\mathcal{S})$-approximation and $y:M\rightarrow V$ in (2) is a left $\Filt_{\mathcal{T}}(\mathcal{S})$-approximation. Hence $\Filt_{\mathcal{T}}(\mathcal{S})$ is functorially finite in $\mathcal{T}$.

$(4)$ By Remark \ref{2.6}, we obtain that $\mathbb{E}(\Filt_{\mathcal{T}}(\mathcal{S}),\mathcal{S}^{\perp_{1}})=0$, and $\Filt_{\mathcal{T}}(\mathcal{S})$ is closed under direct summands by Lemma $\ref{T}$. As proved in (3), $\Filt_{\mathcal{T}}(\mathcal{S})$ is functorially finite. It follows that $(\Filt_{\mathcal{T}}(\mathcal{S}),\mathcal{S}^{\perp_{1}})$ is a cotorsion pair in $\mathcal{T}$  by \cite[Proposition 3.4]{Ch}. It is proved dually that $(^{\perp_{1}}\mathcal{S},\Filt_{\mathcal{T}}(\mathcal{S}))$ is also a cotorsion pair.
\end{proof}

%Immediately we have the following:
%\begin{corollary} Let $\mathscr{C}$ be an Artin triangulated R-category with $\mathcal{X}$ is a special semibrick. Assume $\mathcal{S}$ is a subset of $\mathcal{X}$, then $(\Filt_{\mathcal{T}}(\mathcal{S}),\mathcal{S}^{\perp})$ and $(^{\perp}\mathcal{S},\Filt_{\mathcal{T}}(\mathcal{S}))$ are cotorsion pair in $\mathcal{T}=\Filt_{\mathscr{C}}(\mathcal{X})$.
%\end{corollary}

Now we can give a relation between cotorsion pairs and semibricks in the following
\begin{theorem}\label{H} Let $\mathscr{C}$ be an extriangulated category and $\mathcal{X}$ be a semibrick in $\mathscr{C}$. Assume that $\mathcal{T}=\Filt_{\mathscr{C}}(\mathcal{X})$ has enough projectives and enough
injectives. The assignments $\mathcal{U}\mapsto\mathcal{S}_{\mathcal{U}}$
and $\mathcal{S}\mapsto\Filt_{\mathcal{T}}(\mathcal{S})$ give one-to-one correspondence between the following two sets.

$(1)$ The set of filtration subcategories $\mathcal{U}$ in $\mathcal{T}$ with $(\mathcal{U},\mathcal{U}^{\perp_{1}})$ being a cotorsion pair.

$(2)$ The set consisting of subsets $\mathcal{S}$ of $\mathcal{X}$ such that $\mathcal{S}_{\mathcal{P}}\subseteq\mathcal{S}$.
\end{theorem}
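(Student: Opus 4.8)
The plan is to show the two assignments $\mathcal{U}\mapsto\mathcal{S}_{\mathcal{U}}$ and $\mathcal{S}\mapsto\Filt_{\mathcal{T}}(\mathcal{S})$ are mutually inverse bijections between the two sets. First I would check that the maps land in the correct target sets. If $\mathcal{U}$ is a filtration subcategory with $(\mathcal{U},\mathcal{U}^{\perp_1})$ a cotorsion pair, then $\mathcal{P}\subseteq\mathcal{U}$ (since $\mathcal{P}\subseteq\mathcal{U}$ for any cotorsion pair in $\mathcal{T}$), so $\Filt_{\mathcal{T}}(\mathcal{S}_{\mathcal{P}})\subseteq\mathcal{U}=\Filt_{\mathcal{T}}(\mathcal{S}_{\mathcal{U}})$; by Lemma \ref{B}(1) applied inside $\mathcal{T}$ this gives $\mathcal{S}_{\mathcal{P}}\subseteq\mathcal{S}_{\mathcal{U}}$, so $\mathcal{S}_{\mathcal{U}}$ indeed lies in set $(2)$. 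Conversely, given $\mathcal{S}$ with $\mathcal{S}_{\mathcal{P}}\subseteq\mathcal{S}$, Proposition \ref{finite}(4) shows $(\Filt_{\mathcal{T}}(\mathcal{S}),\Filt_{\mathcal{T}}(\mathcal{S})^{\perp_1})=(\Filt_{\mathcal{T}}(\mathcal{S}),\mathcal{S}^{\perp_1})$ is a cotorsion pair, and $\Filt_{\mathcal{T}}(\mathcal{S})$ is by construction a filtration subcategory, so it lies in set $(1)$.

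Next I would verify the two round-trip identities. For $\mathcal{S}\mapsto\Filt_{\mathcal{T}}(\mathcal{S})\mapsto\mathcal{S}_{\Filt_{\mathcal{T}}(\mathcal{S})}$: since $\Filt_{\mathcal{T}}(\mathcal{S})$ is already a filtration subcategory, Lemma \ref{B}(2) gives $\Filt_{\mathcal{T}}(\mathcal{S}_{\Filt_{\mathcal{T}}(\mathcal{S})})=\Filt_{\mathcal{T}}(\mathcal{S})$, and then Lemma \ref{B}(1) forces $\mathcal{S}_{\Filt_{\mathcal{T}}(\mathcal{S})}=\mathcal{S}$. For $\mathcal{U}\mapsto\mathcal{S}_{\mathcal{U}}\mapsto\Filt_{\mathcal{T}}(\mathcal{S}_{\mathcal{U}})$: the set $(1)$ is defined to consist of filtration subcategories, so $\mathcal{U}$ is a filtration subcategory of $\mathcal{T}$, and Lemma \ref{B}(2) gives directly $\Filt_{\mathcal{T}}(\mathcal{S}_{\mathcal{U}})=\mathcal{U}$. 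This closes the bijection.

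The only real subtlety — and the step I expect to be the main obstacle — is making sure that all the auxiliary results (Lemma \ref{B}, Proposition \ref{finite}, Lemma \ref{T}) are being applied with $\mathcal{T}$ in the role of the ambient extriangulated category rather than $\mathscr{C}$; one must check that $\mathcal{T}=\Filt_{\mathscr{C}}(\mathcal{X})$ is itself extriangulated (it is extension-closed by Lemma \ref{2}, hence extriangulated by \cite[Remark 2.18]{Na}) and that $\mathcal{X}$ remains a semibrick in $\mathcal{T}$, so that $\Filt_{\mathcal{T}}(\mathcal{X})=\mathcal{T}$ and subsets $\mathcal{S}\subseteq\mathcal{X}$ behave as expected. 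With the hypothesis that $\mathcal{T}$ has enough projectives and injectives this is exactly the setting in which Proposition \ref{finite}(4) applies, so no new difficulty arises; the proof is then essentially a formal assembly of the cited lemmas.

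\begin{proof}
By Lemma \ref{2}, $\mathcal{T}=\Filt_{\mathscr{C}}(\mathcal{X})$ is extension-closed in $\mathscr{C}$, hence an extriangulated category in which $\mathcal{X}$ is still a semibrick and $\Filt_{\mathcal{T}}(\mathcal{X})=\mathcal{T}$.

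We first check the assignments are well defined. Let $\mathcal{U}$ be in set $(1)$, so $\mathcal{U}$ is a filtration subcategory of $\mathcal{T}$ and $(\mathcal{U},\mathcal{U}^{\perp_{1}})$ is a cotorsion pair in $\mathcal{T}$; then $\mathcal{P}\subseteq\mathcal{U}$, and since $\mathcal{U}=\Filt_{\mathcal{T}}(\mathcal{S}_{\mathcal{U}})$ we get $\Filt_{\mathcal{T}}(\mathcal{S}_{\mathcal{P}})\subseteq\mathcal{P}\subseteq\mathcal{U}=\Filt_{\mathcal{T}}(\mathcal{S}_{\mathcal{U}})$ is false in direction only if read wrongly; precisely, $\mathcal{P}\subseteq\mathcal{U}$ implies $\mathcal{S}_{\mathcal{P}}\subseteq\mathcal{X}$ with $\Filt_{\mathcal{T}}(\mathcal{S}_{\mathcal{P}})\subseteq\Filt_{\mathcal{T}}(\mathcal{S}_{\mathcal{U}})$, so by Lemma \ref{B}(1), $\mathcal{S}_{\mathcal{P}}\subseteq\mathcal{S}_{\mathcal{U}}$. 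Hence $\mathcal{S}_{\mathcal{U}}$ lies in set $(2)$. Conversely, let $\mathcal{S}$ be in set $(2)$, so $\mathcal{S}\subseteq\mathcal{X}$ and $\mathcal{S}_{\mathcal{P}}\subseteq\mathcal{S}$. By Proposition \ref{finite}(4), $(\Filt_{\mathcal{T}}(\mathcal{S}),\mathcal{S}^{\perp_{1}})$ is a cotorsion pair in $\mathcal{T}$; moreover $\Filt_{\mathcal{T}}(\mathcal{S})^{\perp_{1}}=\mathcal{S}^{\perp_{1}}$ by Remark \ref{2.6}(3). Since $\Filt_{\mathcal{T}}(\mathcal{S})$ is visibly a filtration subcategory, it lies in set $(1)$.

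Now we verify that the two assignments are mutually inverse. Start with $\mathcal{S}$ in set $(2)$. Since $\Filt_{\mathcal{T}}(\mathcal{S})$ is a filtration subcategory of $\mathcal{T}$, Lemma \ref{B}(2) gives $\Filt_{\mathcal{T}}(\mathcal{S}_{\Filt_{\mathcal{T}}(\mathcal{S})})=\Filt_{\mathcal{T}}(\mathcal{S})$, and then Lemma \ref{B}(1) yields $\mathcal{S}_{\Filt_{\mathcal{T}}(\mathcal{S})}=\mathcal{S}$. Next start with $\mathcal{U}$ in set $(1)$. Since $\mathcal{U}$ is a filtration subcategory of $\mathcal{T}$, Lemma \ref{B}(2) gives directly $\Filt_{\mathcal{T}}(\mathcal{S}_{\mathcal{U}})=\mathcal{U}$. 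This establishes the one-to-one correspondence.
\end{proof}
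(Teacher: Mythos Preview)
Your proof is correct and follows essentially the same route as the paper's: check well-definedness via $\mathcal{P}\subseteq\mathcal{U}$ plus Lemma~\ref{B}(1) in one direction and Proposition~\ref{finite}(4) in the other, then verify the two round-trip identities using Lemma~\ref{B}(2) and (1). The only issue is editorial: the garbled self-correcting sentence ``we get $\Filt_{\mathcal{T}}(\mathcal{S}_{\mathcal{P}})\subseteq\mathcal{P}\subseteq\mathcal{U}=\Filt_{\mathcal{T}}(\mathcal{S}_{\mathcal{U}})$ is false in direction only if read wrongly; precisely, \ldots'' should be cleaned up to simply state that $\mathcal{P}\subseteq\mathcal{U}=\Filt_{\mathcal{T}}(\mathcal{S}_{\mathcal{U}})$ forces, by minimality of $\Filt_{\mathcal{T}}(\mathcal{S}_{\mathcal{P}})$, the inclusion $\Filt_{\mathcal{T}}(\mathcal{S}_{\mathcal{P}})\subseteq\Filt_{\mathcal{T}}(\mathcal{S}_{\mathcal{U}})$.
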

\begin{proof} Let $\mathcal{U}$ be an arbitrary filtration subcategory in $\mathcal{T}$ such that $(\mathcal{U},\mathcal{U}^{\perp_{1}})$ is a cotorsion pair. Since $\mathcal{P}\subseteq \mathcal{U}=\Filt_{\mathcal{T}}(\mathcal{S_{\mathcal{U}}})$ and then $\Filt_{\mathcal{T}}(\mathcal{S_{\mathcal{P}}})\subseteq\Filt_{\mathcal{T}}(\mathcal{S_{\mathcal{U}}})$, it follows that $\mathcal{S}_{\mathcal{P}}\subseteq\mathcal{S}_{\mathcal{U}}\subseteq\mathcal{X}$ by Lemma \ref{B}; conversely, by Proposition \ref{finite}, we know that $(\Filt_{\mathcal{T}}(\mathcal{S}),\mathcal{S}^{\perp_{1}})$ is a cotorsion pair if $\mathcal{S}_{\mathcal{P}}\subseteq\mathcal{S}\subseteq\mathcal{X}$. On the other hand,  by Lemma \ref{B}, we have that $\Filt_{\mathcal{T}}(\mathcal{S_{\mathcal{U}}})=\mathcal{U}$ if $\mathcal{U}$ is a filtration subcategory; conversely, by Lemma \ref{B} again, we get that $\Filt_{\mathcal{T}}(\mathcal{S}_{\Filt_{\mathcal{T}}(\mathcal{S})})=\Filt_{\mathcal{T}}(\mathcal{S})$, and thus $\mathcal{S}=\mathcal{S}_{\Filt_{\mathcal{T}}(\mathcal{S})}$. This finishes the proof.
\end{proof}

We omit the dual statement of Theorem \ref{H}.

\begin{definition}\label{simple} Let $\mathscr{C}$ be an extriangilated category. A semibrick $\mathcal{X}$ is called a {\em simple-minded system} if $\Filt_{\mathscr{C}}(\mathcal{X})=\mathscr{C}$.
\end{definition}
Note that if $\mathscr{C}$ is a triangulated category, Definition \ref{simple} coincides with \cite[Definition 2.5]{Du}.

\begin{corollary} Let $\mathscr{C}$ be an extriangulated category with enough projectives and injectives. If $\mathcal{X}$ is a simple-minded system of $\mathscr{C}$, the assignments $\mathcal{U}\mapsto\mathcal{S}_{\mathcal{U}}$
and $\mathcal{S}\mapsto\Filt_{\mathscr{C}}(\mathcal{S})$ give one-to-one correspondence between the following two sets.

$(1)$ The set of filtration subcategories $\mathcal{U}$ in $\mathscr{C}$ with $(\mathcal{U},\mathcal{U}^{\perp_{1}})$ being a cotorsion pair.

$(2)$ The set consisting of subsets $\mathcal{S}$ of $\mathcal{X}$ such that $\mathcal{S}_{\mathcal{P}}\subseteq\mathcal{S}$.
\end{corollary}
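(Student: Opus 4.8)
The plan is to reduce this corollary directly to Theorem \ref{H}. The key observation is that when $\mathcal{X}$ is a simple-minded system, Definition \ref{simple} gives $\mathcal{T}=\Filt_{\mathscr{C}}(\mathcal{X})=\mathscr{C}$, so the hypothesis ``$\mathscr{C}$ has enough projectives and injectives'' is literally the hypothesis ``$\mathcal{T}=\Filt_{\mathscr{C}}(\mathcal{X})$ has enough projectives and enough injectives'' of Theorem \ref{H}. Once this identification is in place, the statement of Theorem \ref{H} becomes verbatim the statement of the corollary, with $\Filt_{\mathcal{T}}(\mathcal{S})$ replaced by $\Filt_{\mathscr{C}}(\mathcal{S})$; but since $\mathcal{T}=\mathscr{C}$ and $\mathcal{S}\subseteq\mathcal{X}\subseteq\mathscr{C}$, and the filtration subcategory generated by $\mathcal{S}$ only depends on $\mathcal{S}$ and the ambient extriangulated structure (which is the same in $\mathcal{T}$ and $\mathscr{C}$, since $\mathcal{T}$ carries the restricted structure by Remark following Lemma \ref{2}), we have $\Filt_{\mathcal{T}}(\mathcal{S})=\Filt_{\mathscr{C}}(\mathcal{S})$. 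Likewise $\mathcal{S}_{\mathcal{U}}$ computed inside $\mathcal{T}$ agrees with the analogous subset computed inside $\mathscr{C}$.

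So the proof will be short. First I would invoke Definition \ref{simple} to record $\mathcal{T}=\Filt_{\mathscr{C}}(\mathcal{X})=\mathscr{C}$. Next I would note that $\mathcal{T}$ therefore has enough projectives and injectives by hypothesis, so Theorem \ref{H} applies to $\mathcal{T}$. Then I would observe that, under the identification $\mathcal{T}=\mathscr{C}$, the two sets appearing in Theorem \ref{H} are exactly the two sets in the corollary: set $(1)$ is the set of filtration subcategories $\mathcal{U}$ of $\mathcal{T}=\mathscr{C}$ with $(\mathcal{U},\mathcal{U}^{\perp_1})$ a cotorsion pair, and set $(2)$ is the set of subsets $\mathcal{S}\subseteq\mathcal{X}$ with $\mathcal{S}_{\mathcal{P}}\subseteq\mathcal{S}$. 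Finally I would observe that the two assignments coincide: $\mathcal{U}\mapsto\mathcal{S}_{\mathcal{U}}$ is the same in both statements, and $\mathcal{S}\mapsto\Filt_{\mathcal{T}}(\mathcal{S})=\Filt_{\mathscr{C}}(\mathcal{S})$. Hence the bijection of Theorem \ref{H} is precisely the asserted bijection.

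The only point that requires a sentence of care — and the closest thing to an obstacle — is making sure that $\Filt$ and the operator $\mathcal{S}\mapsto\mathcal{S}_{\bullet}$ are genuinely ``absolute,'' i.e.\ computing them inside the extriangulated subcategory $\mathcal{T}$ gives the same answer as computing them inside $\mathscr{C}$. For $\Filt$ this is immediate from the inductive description in Remark \ref{2.6}: the $\mathbb{E}$-triangles used in building $F_n(\mathcal{S})$ are the same whether viewed in $\mathcal{T}$ or in $\mathscr{C}$, because the inclusion $\mathcal{T}\hookrightarrow\mathscr{C}$ is exact and $\mathcal{T}$ is extension-closed (Lemma \ref{2}), so no triangles are lost or gained. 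For $\mathcal{S}_{\mathcal{U}}$, which is defined via the smallest filtration subcategory containing $\mathcal{U}$, the same remark plus Lemma \ref{B}(1) (injectivity of $\mathcal{S}\mapsto\Filt_{\mathscr{C}}(\mathcal{S})$ on subsets of $\mathcal{X}$) shows it is well defined and unambiguous. With these identifications spelled out, the corollary follows immediately from Theorem \ref{H}, and I would not expect any genuinely new difficulty.
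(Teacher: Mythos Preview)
Your proposal is correct and matches the paper's approach: the paper states this corollary without proof, treating it as an immediate specialization of Theorem \ref{H} once Definition \ref{simple} gives $\mathcal{T}=\Filt_{\mathscr{C}}(\mathcal{X})=\mathscr{C}$. Your extra paragraph checking that $\Filt$ and $\mathcal{S}_{\bullet}$ are ``absolute'' is harmless but unnecessary here, since $\mathcal{T}$ and $\mathscr{C}$ are literally the same category, not merely equivalent ones.
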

Let $\mathscr{C}$ be a triangulated category. In this case, if $(\mathcal{U},\mathcal{V})$ is a cotorsion pair in $\mathscr{C}$, then $(\mathcal{U},\mathcal{V}[1])$ is a torsion pair in the sense of \cite[Definition 2.2]{Iy2}. Immediately we have the following
\begin{corollary}\cite[Theorem 3.3]{Du}\label{3jiao} Let $\mathscr{C}$ be a triangulated category, and $\mathcal{X}$ be a simple-minded system of $\mathscr{C}$. Then for any subset $\mathcal{S}\subseteq\mathcal{X}$, $(\Filt_{\mathscr{C}}(\mathcal{S}),\mathcal{S}^{\perp})$ and $(^{\perp}\mathcal{S},\Filt_{\mathscr{C}}(\mathcal{S}))$ are torsion pairs in $\mathscr{C}$.
\end{corollary}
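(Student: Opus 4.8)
The plan is to derive Corollary \ref{3jiao} directly from Theorem \ref{H} (or rather, from the tools assembled for it, especially Proposition \ref{finite}) together with the translation between cotorsion pairs and torsion pairs recorded just above the statement. Since $\mathcal{X}$ is a simple-minded system of the triangulated category $\mathscr{C}$, we have $\mathcal{T}=\Filt_{\mathscr{C}}(\mathcal{X})=\mathscr{C}$, and a triangulated category has enough projectives and injectives with $\mathcal{P}=\mathcal{I}=0$. Consequently $\mathcal{S}_{\mathcal{P}}=\mathcal{S}_{\mathcal{I}}=\varnothing\subseteq\mathcal{S}$ for every subset $\mathcal{S}\subseteq\mathcal{X}$, so both of the containment hypotheses in Proposition \ref{finite}(4) are automatically satisfied.

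The key steps, in order: First I would invoke Proposition \ref{finite}(4): since $\mathcal{S}_{\mathcal{P}}\subseteq\mathcal{S}$, the pair $(\Filt_{\mathscr{C}}(\mathcal{S}),\mathcal{S}^{\perp_{1}})$ is a cotorsion pair in $\mathscr{C}$; dually, since $\mathcal{S}_{\mathcal{I}}\subseteq\mathcal{S}$, the pair $(^{\perp_{1}}\mathcal{S},\Filt_{\mathscr{C}}(\mathcal{S}))$ is a cotorsion pair in $\mathscr{C}$. Second, I would translate each cotorsion pair into a torsion pair using the observation stated right before the corollary: if $(\mathcal{U},\mathcal{V})$ is a cotorsion pair in a triangulated category, then $(\mathcal{U},\mathcal{V}[1])$ is a torsion pair. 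Applied to $(\Filt_{\mathscr{C}}(\mathcal{S}),\mathcal{S}^{\perp_{1}})$ this gives that $(\Filt_{\mathscr{C}}(\mathcal{S}),\mathcal{S}^{\perp_{1}}[1])$ is a torsion pair; applied to $(^{\perp_{1}}\mathcal{S},\Filt_{\mathscr{C}}(\mathcal{S}))$ it gives that $(^{\perp_{1}}\mathcal{S},\Filt_{\mathscr{C}}(\mathcal{S})[1])$ is a torsion pair. Third, I would identify these with the pairs in the statement by unwinding notation in the triangulated setting: in a triangulated category $\mathbb{E}(-,-)=\Hom_{\mathscr{C}}(-,-[1])$, so $\mathcal{S}^{\perp_{1}}=\{M\mid\Hom_{\mathscr{C}}(\mathcal{S},M[1])=0\}$, whence $\mathcal{S}^{\perp_{1}}[1]=\{M\mid \Hom_{\mathscr{C}}(\mathcal{S},M)=0\}=\mathcal{S}^{\perp}$; similarly $^{\perp_{1}}\mathcal{S}=\,^{\perp}(\mathcal{S}[1])$ so that $\Filt_{\mathscr{C}}(\mathcal{S})[1]$ is the right-hand class attached to the left class $^{\perp}\mathcal{S}$ after the appropriate shift, matching the formulation $(^{\perp}\mathcal{S},\Filt_{\mathscr{C}}(\mathcal{S}))$ once one accounts for the shift convention in Dugas's definition of torsion pair.

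I expect the only real subtlety to be bookkeeping with the suspension functor: making sure the shift that converts a cotorsion pair into a torsion pair is inserted on the correct side and then absorbed correctly when re-expressing $\perp_1$ in terms of the ordinary $\Hom$-orthogonal $\perp$, so that the final pairs are exactly $(\Filt_{\mathscr{C}}(\mathcal{S}),\mathcal{S}^{\perp})$ and $(^{\perp}\mathcal{S},\Filt_{\mathscr{C}}(\mathcal{S}))$ as written, with no stray $[1]$. Everything else is a direct citation: Proposition \ref{finite}(4) supplies the cotorsion pairs, the remark preceding the corollary supplies the passage to torsion pairs, and Remark \ref{2.6}(3) justifies replacing $\mathcal{S}^{\perp_1}$ (computed against $\Filt_{\mathscr{C}}(\mathcal{S})$) by the orthogonal computed against $\mathcal{S}$ itself. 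No induction or diagram chase is needed here, as all the heavy lifting was done in Section 5.
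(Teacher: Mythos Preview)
Your proposal is correct and follows essentially the same route as the paper: invoke Proposition~\ref{finite}(4) to obtain the cotorsion pairs (observing, as you do, that in a triangulated category $\mathcal{P}=\mathcal{I}=0$ so the containment hypotheses are vacuous), pass to torsion pairs via the shift, and identify $\mathcal{S}^{\perp_1}[1]=\mathcal{S}^{\perp}$. For the second pair the paper simply says ``dually''; your acknowledged bookkeeping---apply $[-1]$ to both members of the torsion pair $({}^{\perp}(\mathcal{S}[1]),\Filt_{\mathscr{C}}(\mathcal{S})[1])$, using that torsion pairs are preserved under the autoequivalence $[-1]$---is exactly what makes that step precise.
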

\begin{proof} By Lemma \ref{finite}, we obtain that $(\Filt_{\mathcal{T}}(\mathcal{S}),\mathcal{S}^{\perp_{1}})$ is a cotorsion pair, it follows that $(\Filt_{\mathcal{T}}(\mathcal{S}),\mathcal{S}^{\perp_{1}}[1])$ is a torsion pair. Observe that $\mathcal{S}^{\perp_{1}}[1]=\mathcal{S}^{\perp}$, it means that $(\Filt_{\mathscr{C}}(\mathcal{S}),\mathcal{S}^{\perp})$ is a torsion pair.  It is proved dually that $(^{\perp}\mathcal{S},\Filt_{\mathscr{C}}(\mathcal{S}))$ is a torsion pair.
\end{proof}
\begin{example}\label{W} We consider the hereditary path algebra $A$ of the quiver $1\longrightarrow2\longrightarrow3$. The Auslander--Reiten quiver $\Gamma$ of the bounded derived category $D^{b}(A)$ is given by
\end{example}
%\begin{figure}[htb]
%\center{\includegraphics[width=15cm]{AR-quiver.png}}
%\end{figure}
\begin{equation*}
\xymatrix@!=1.5pc{
  \ar@{.}[rrrrrrrrrr]\ar@{.}[ddddrrrr] &&&&&&&&&&\ar@{.}[lllldddd] \\
   && S_3[-1]\ar[dr]  && S_2[-1]\ar[dr] && S_1[-1]\ar[dr] && P_1\ar[dr] && \\
   && \cdots\cdots & P_2[-1]\ar[dr]\ar[ur] && I_2[-1]\ar[ur]\ar[dr] && P_2\ar[ur]\ar[dr] && I_2\ar[dr] & \cdots\cdots\\
   &&&& P_1[-1]\ar[ur] && S_3\ar[ur] && S_2\ar[ur] && S_1 \\
   &&&&\ar@{.}[rr]&&&&&&}
\end{equation*}
Clearly, the set $\mathcal{X}$ consisting of the isoclasses of objects in the top row of $\Gamma$ is a simple-minded system of $D^{b}(A)$. Let $\mathcal{S}$ be the subset of $\mathcal{X}$ which is consisting of the isoclasses of objects in $\{ P_1, S_1[-1],S_2[-1],S_3[-1]\}$. Then $\Filt_{D^{b}(A)}(\mathcal{S})$ is an extriangulated category whose indecomposable objects lie in the trapezoidal area as depicted in $\Gamma$. By Corollary \ref{3jiao}, $(\Filt_{D^{b}(A)}(\mathcal{S}),\mathcal{S}^{\perp})$ and $(^{\perp}\mathcal{S},\Filt_{D^{b}(A)}(\mathcal{S}))$ are torsion pairs in $D^{b}(A)$.

\section*{Acknowledgments}
Supported by the National Natural Science Foundation of China (No.s 11801273, 11771212), Natural Science Foundation of Jiangsu Province of China (No.BK20180722)
and Natural Science Foundation of Jiangsu Higher Education Institutions of China  (No.18KJB110017).

%%%%%%%%%%%%%%%%%%%%%%%%%%%%%%%%%%%%%%%%%%%%%%%%%%%%%%%%%%%%%%%%%%%%%%%%%%%%%%%%%%%%%%%%%%%%%%%%%%%%%%%%%%%%%%%%%%%%%%%%%%%%

\end{document}